\def\VR{\kern-\arraycolsep\strut\vrule &\kern-\arraycolsep}
\def\vr{\kern-\arraycolsep & \kern-\arraycolsep}
\newcommand{\im}{\ensuremath{\text{im }}}
\newcommand{\dsp}{\ensuremath{\displaystyle}}
\newcommand{\dv}{\ensuremath{\underline{\dim }\, }}
\newcommand{\be}{\begin{enumerate}}
\newcommand{\mbb}[1]{\ensuremath{\mathbb{#1}}}
\newcommand{\snot}{\ensuremath{\; \, \diagdown \; \,}}
\newcommand{\glb}{\text{GL}( \ensuremath{\beta} )}
\newcommand{\la}{\ensuremath{\langle}}
\newcommand{\ra}{\ensuremath{\rangle}}
\newcommand{\req}{\ensuremath{ \mathcal{R}(Q)}}
\newcommand{\vci}[1]{\begingroup
\setbox0=\hbox{#1}%
\parbox{\wd0}{\box0}\endgroup}
\newtheorem{theorem}{Theorem}
\newtheorem{lemma}[theorem]{Lemma}
\newtheorem{prop}[theorem]{Proposition}
\newtheorem{corollary}[theorem]{Corollary}
\theoremstyle{definition}
\newtheorem{definition}[theorem]{Definition}
\newtheorem*{recall}{Recall}
\newtheorem{rmk}{Remark}
\newenvironment{remark}[1][]{\begin{rmk}[#1]\pushQED{\qed}}{\popQED \end{rmk}}
\newtheorem{obs}{Observation}
\newtheorem{ex}{Example}
\newenvironment{example}[1][]{\begin{ex}[#1]\pushQED{\qed}}{\popQED \end{ex}}
\newcommand{\Hom}{\operatorname{Hom}}
\newcommand{\End}{\operatorname{End}}
\newcommand{\Ext}{\operatorname{Ext}}
\newcommand{\rep}{\operatorname{rep}}
\newcommand{\SI}{\operatorname{SI}}
\newcommand{\SL}{\operatorname{SL}}
\newcommand{\GL}{\operatorname{GL}}
\newcommand{\ZZ}{\mathbb Z}
\newcommand{\Mat}{\operatorname{Mat}}
\newcommand{\Stab}{\operatorname{Stab}}
\newcommand{\ddim}{\operatorname{\mathbf{dim}}}
\newcommand{\dd}{\operatorname{\mathbf{d}}}
 \newcommand{\ee}{\operatorname{\mathbf{e}}}
\newcommand{\rSoc}{\text{rSoc}}
\newcommand{\rTop}{\text{rTop}}
\newcommand{\module}{\operatorname{mod}}
\begin{document}
\title{ On locally semi-simple representations of quivers}

\author{Calin Chindris}
\address{University of Missouri-Columbia, Mathematics Department, Columbia, MO, USA}
\email[Calin Chindris]{chindrisc@missouri.edu}

\author{Dan Kline}
\address{University of Missouri-Columbia, Mathematics Department, Columbia, MO, USA}
\email[Dan Kline]{dbkfz9@mail.missouri.edu}

\date{\today}
\bibliographystyle{plain}
\subjclass[2000]{16G20, 13A50}
\keywords{locally semi-simple representations; Schur representations; orthogonal Schur sequences}

\begin{abstract} In this paper, we solve a problem raised by V. Kac in \cite{Kac} on locally semi-simple quiver representations. Specifically, we show that an acyclic quiver $Q$ is of tame representation type if and only if every representation of $Q$ with a semi-simple ring of endomorphisms is locally semi-simple.
\end{abstract}

\maketitle
\setcounter{tocdepth}{1}
\tableofcontents

\section{Introduction}
Throughout, $K$ denotes an algebraically closed field of characteristic zero. All quivers are assumed to be finite, connected, and without oriented cycles. All representations and modules are assumed to be finite-dimensional. By a module, we always mean a left module.

In \cite{Kac}, Kac asked for a representation-theoretic description of the so-called locally semi-simple representations of a quiver $Q$. These representations arise most naturally when studying quiver representations within the general framework of invariant theory. In this context, a $\beta$-dimensional representation is said to be locally semi-simple if its $\SL(\beta)$-orbit is closed in the representation space of $\beta$-dimensional representations. It follows from general results of King \cite{K} and Shmelkin \cite{Shm1} that if $V$ is a locally semi-simple representation of $Q$ then the ring $\End_Q(V)$ of endomorphisms of $V$ is semi-simple (see Section \ref{locally-semi-simple-sec}). In \cite[page 161]{Kac}, Kac suggests that the converse ought to be true when $Q$ is a tame quiver. In this paper, we prove that this is indeed the case. On the other hand, we show that for any wild quiver there exist representations with semi-simple rings of endomorphisms which are not locally semi-simple. Our main result is:

\begin{theorem} \label{main-thm} Let $Q$ be a quiver. Then the following statements are equivalent:
\begin{enumerate}
\item $Q$ is tame;
\item every representation $V$ of $Q$ with a semi-simple ring of endomorphisms is locally semi-simple.
\end{enumerate}
\end{theorem}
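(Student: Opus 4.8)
The plan is to reduce both implications to a single criterion for local semisimplicity and then to separate the tame and wild cases through the Tits form $q(\alpha)=\la\alpha,\alpha\ra$ associated to the Euler form. First I would record the two standard reformulations. On one hand, $\End_Q(V)$ is semisimple if and only if $V\cong\bigoplus_{i=1}^{s}V_i^{m_i}$, where $V_1,\dots,V_s$ are pairwise non-isomorphic Schur representations (bricks, $\End_Q(V_i)=K$) that are \emph{Hom-orthogonal}, i.e.\ $\Hom_Q(V_i,V_j)=0$ for $i\neq j$; this is the orthogonal Schur sequence attached to $V$. On the other hand, I would analyze closedness of the $\SL(\beta)$-orbit by Hilbert--Mumford. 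A one-parameter subgroup of $\SL(\beta)=\prod_v\SL_{\beta_v}$ is a $\ZZ$-grading $V=\bigoplus_k V^{(k)}$ subject to $\sum_k k\,\ddim V^{(k)}=0$; the limit exists exactly when the descending weight spaces form a subrepresentation filtration, in which case it is the associated graded. Hence $\SL(\beta)\cdot V$ is closed if and only if every filtration of $V$ by subrepresentations all of whose subquotient dimension vectors are proportional to $\ddim V$ splits. Specializing to two steps: $V$ fails to be locally semisimple precisely when it has a proper subrepresentation $W$ with $\ddim W$ proportional to $\ddim V$ and $V\not\cong W\oplus V/W$.

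For the implication (2)$\Rightarrow$(1) I would argue by contraposition, and the engine is the following clean observation: if $W,W'$ are Hom-orthogonal bricks, then every non-split extension $0\to W\to V\to W'\to 0$ is again a brick. Indeed, for $\phi\in\End_Q(V)$ the composite $W\hookrightarrow V\xrightarrow{\phi}V\twoheadrightarrow W'$ lies in $\Hom_Q(W,W')=0$, so $\phi$ preserves $W$ and induces scalars $\lambda$ on $W$ and $\mu$ on $W'$; then $\phi-\lambda\,\mathrm{Id}$ factors through $\Hom_Q(W',V)$, which the long exact sequence identifies with $\ker\big(\Hom_Q(W',W')\xrightarrow{\cup[\eta]}\Ext^1_Q(W',W)\big)=0$, the extension class $[\eta]$ being nonzero; hence $\phi$ is scalar. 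Now a wild quiver has indefinite Tits form, so there is a positive imaginary Schur root $\gamma$ with $q(\gamma)\le-1$. The brick locus in $\rep(Q,\gamma)$ then has dimension $1-q(\gamma)\ge 2$, so I can choose two sufficiently general non-isomorphic bricks $W,W'$ of dimension $\gamma$; by genericity $\Hom_Q(W,W')=\Hom_Q(W',W)=0$ while $\ext(W',W)=-q(\gamma)>0$. A non-split extension $V$ of dimension $2\gamma$ is then, by the observation, a brick containing the proper subrepresentation $W$ of dimension $\gamma=\tfrac12\ddim V$ proportional to $\ddim V$. Thus $\End_Q(V)=K$ is semisimple but $V$ is not locally semisimple, so (2) fails.

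For (1)$\Rightarrow$(2) I would use that a Dynkin or Euclidean quiver has positive semidefinite Tits form, with radical $\ZZ\delta$ in the Euclidean case and $\delta$ primitive. The first point is that no brick $V_i$ has a proper subrepresentation of proportional dimension: if $\ddim W=c\,\ddim V_i$ with $0<c<1$, then $q(\ddim V_i)=1-\ext(V_i,V_i)\in\{0,1\}$, and $q(\ddim W)=c^2 q(\ddim V_i)$ would be a nonnegative integer; this is impossible when $q(\ddim V_i)=1$ (as $c^2\notin\ZZ$), and when $q(\ddim V_i)=0$ the brick $V_i$ is regular of dimension the primitive null root $\delta$, so no integral vector $c\,\ddim V_i$ with $0<c<1$ exists. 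It then remains to upgrade this from single bricks to the whole orthogonal Schur sequence: I would show that every proportional subrepresentation of $V=\bigoplus_i V_i^{m_i}$ splits off, by controlling the extensions between the proportional subquotients via the preprojective/regular/preinjective trichotomy and the tube combinatorics of tame quivers (in particular, Hom-orthogonal bricks with equal dimension vector are forced into distinct homogeneous tubes, where the relevant $\Ext^1$ vanishes, so the brick-extension mechanism of the wild case cannot occur). Equivalently, I would produce a single weight $\sigma$ with $\sigma\cdot\ddim V_i=0$ for all $i$ making each $V_i$ simultaneously $\sigma$-stable, certifying $\sigma$-polystability and hence closedness of the $\SL(\beta)$-orbit.

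I expect the main obstacle to be exactly this last step of the tame direction: passing from the clean single-brick statement to arbitrary orthogonal Schur sequences, that is, ruling out \emph{mixing} proportional subrepresentations of $\bigoplus_i V_i^{m_i}$ and, equivalently, constructing the common stabilizing weight. This is where the full representation theory of tame quivers---defect, the structure of the regular components, and the interaction of Hom-orthogonality with proportional dimension vectors---must be brought to bear, and it is precisely the tame-specific input that breaks down in the wild case through the explicit brick constructed above.
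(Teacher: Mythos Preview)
Your argument for the wild direction $(2)\Rightarrow(1)$ is correct and takes a different, more intrinsic route than the paper. The paper exhibits an explicit Schur representation of the $3$-Kronecker quiver $K_3$ with a proper subrepresentation of half its dimension vector, and then transports it into an arbitrary wild $Q$ via a fully faithful exact embedding $\rep(K_3)\hookrightarrow\rep(Q)$ coming from strict wildness. Your brick-extension construction subsumes this (the paper's example is exactly your construction with $\gamma=(1,1)$) and avoids the embedding by working directly with an imaginary Schur root of $Q$. One point you should justify explicitly is that two generic non-isomorphic bricks of dimension $\gamma$ are Hom-orthogonal; this follows because a general representation of a Schur-root dimension is $\theta$-stable for the canonical weight $\theta=\la\gamma,\cdot\ra-\la\cdot,\gamma\ra$, and any nonzero morphism between $\theta$-stable representations of the same dimension vector is an isomorphism.

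For the tame direction $(1)\Rightarrow(2)$ you correctly reduce to producing a common stability weight for an arbitrary orthogonal Schur sequence, which is exactly the paper's reformulation, but you do not carry this out --- and this is the entire substance of Section~\ref{tame-sec}. The paper's method is a case analysis. Over a Dynkin quiver, and over a Euclidean quiver when the sequence contains at least one non-regular member, the sequence can be reordered into an orthogonal \emph{exceptional} sequence: one forms the ``$\Ext$-quiver'' on the $V_i$ and shows it is acyclic using positive (semi)definiteness of $q$ together with the defect, after which a theorem of Derksen--Weyman supplies the weight (Proposition~\ref{qsswt}). For a purely regular Euclidean sequence this reduction fails, and the paper instead constructs the weight by hand from the tube combinatorics, using linear independence of the non-homogeneous regular-simple dimension vectors across tubes (Lemma~\ref{depends}, Corollary~\ref{crosstube}, Proposition~\ref{sol1}). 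Your single-brick observation that a tame brick has no proper subrepresentation of proportional dimension is correct but does not touch the interaction between distinct $V_i$, which is where all the work lies.

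Finally, a technical warning: your Hilbert--Mumford reformulation overshoots. A one-parameter subgroup of $\SL(\beta)$ gives a grading with $\sum_k k\,\ddim V^{(k)}=0$, but this does \emph{not} force the individual graded pieces, nor the steps of the associated filtration, to have dimension vector proportional to $\ddim V$; already a three-step filtration with weights $1,0,-1$ and $\ddim V^{(1)}=\ddim V^{(-1)}$ need have no proportional step. Hence ``every proportional subrepresentation splits off'' is strictly weaker than local semisimplicity, and your ``equivalently'' linking it to the existence of a common stabilizing weight is unjustified. For $(2)\Rightarrow(1)$ you only use the easy direction (a non-splitting proportional sub witnesses failure), so no harm is done there; for $(1)\Rightarrow(2)$ you must argue via the weight formulation, not via proportional subrepresentations.
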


The layout of this paper is as follows. In Section \ref{background-sec}, we recall general results from quiver invariant theory and explain how to reformulate Kac's question in terms of orthogonal Schur sequences and stability weights for quivers (see Theorems \ref{locally-semi-simple-gen-theorem} and \ref{ssalg}). In Section \ref{tame-sec}, our goal is to provide a constructive solution to the problem of finding stability weights for orthogonal Schur sequences of representations of tame quivers. This essentially proves one implication of our main result. We complete the proof of Theorem \ref{main-thm} in Section \ref{proof-thm}.\\

\noindent
\textbf{Acknowledgment:} The authors would like to thank Dmitri Shmelkin for clarifying discussions on the results from \cite{Shm1, Shm2}. The authors were supported by the NSA under grant H98230-15-1-0022.

\section{Background} \label{background-sec}
Let $Q=(Q_0,Q_1,t,h)$ be a finite quiver with vertex set $Q_0$ and arrow set $Q_1$. The two functions $t,h:Q_1 \to Q_0$ assign to each arrow $a \in Q_1$ its tail \emph{ta} and head \emph{ha}, respectively.

A representation $V$ of $Q$ over $K$ is a collection $(V(x),V(a))_{x\in Q_0, a\in Q_1}$ of finite-dimensional $K$-vector spaces $V(x)$, $x \in Q_0$, and $K$-linear maps $V(a): V(ta) \to V(ha)$, $a \in Q_1$. The dimension vector of a representation $V$ of $Q$ is the function $\ddim V \colon Q_0 \to \ZZ$ defined by $(\ddim V)(x)=\dim_{K} V(x)$ for $x\in Q_0$. The one-dimensional representation of $Q$ supported at vertex $x \in Q_0$ is denoted by $S_x$ and its dimension vector is denoted by $\ee_x$. By a dimension vector of $Q$, we simply mean a vector $\dd \in \ZZ_{\geq 0}^{Q_0}$.

Let $V$ and $W$ be two representations of $Q$. A morphism $\varphi:V \rightarrow W$ is defined to be a collection $(\varphi(x))_{x \in Q_0}$ of $K$-linear maps with $\varphi(x) \in \Hom_K(V(x), W(x))$ for each $x \in Q_0$, such that $\varphi(ha)V(a)=W(a)\varphi(ta)$ for each $a \in Q_1$. We denote by $\Hom_Q(V,W)$ the $K$-vector space of all morphisms from $V$ to $W$. We say that $V$ is a subrepresentation of $W$ if $V(x)$ is a subspace of $W(x)$ for each $x \in Q_0$ and $(i_x: V(x) \hookrightarrow W(x))_{x \in Q_0}$ is a morphism of representations, i.e. $V(a)$ is the restriction of $W(a)$ to $V(ta)$ for each $a \in Q_1$. The category of all representations of $Q$ is denoted by $\rep(Q)$. It turns out that $\rep(Q)$ is an abelian category. A representation $V \in \rep(Q)$ is called a \emph{Schur representation} if $\End_Q(V) \simeq K$.

Given a quiver $Q$, its path algebra $KQ$ has a $K$-basis consisting of all paths (including the trivial ones), and multiplication in $KQ$ is given by concatenation of paths. It is easy to see that any $KQ$-module defines a representation of $Q$, and vice-versa. Furthermore, the category $\module(KQ)$ of $KQ$-modules is equivalent to the category $\rep(Q)$. In what follows, we identify $\module(KQ)$ and $\rep(Q)$, and use the same notation for a module and the corresponding representation.

The Euler form of $Q$ is the bilinear form $\langle -,- \rangle: \ZZ^{Q_0} \times \ZZ^{Q_0} \to \ZZ$ defined by
\[
\langle \alpha, \beta \rangle=\sum_{x \in Q_0} \alpha(x)\beta(x)-\sum_{a \in Q_1} \alpha(ta) \beta(ha), \forall \alpha, \beta \in \ZZ^{Q_0}.
\]
The corresponding Tits quadratic form is $q:\ZZ^{Q_0}\to \ZZ$, $q(\alpha)=\langle \alpha, \alpha \rangle, \forall \alpha \in \ZZ^{Q_0}.$

\subsection{Semi-stable quiver representations} 
Let $Q$ be a quiver and $\beta \in \ZZ^{Q_0}_{\geq 0}$ a dimension vector of $Q$. The affine space 
\[\rep(Q,\beta):= \prod_{a \in Q_1} \Mat_{\beta(ha)\times \beta(ta)}(K)\]
is called the representation space of $\beta$-dimensional representations of $Q$. It is acted upon by the base change group \[\GL(\beta):=\prod_{x\in Q_0}\GL(\beta(x),K)\] by simultaneous conjugation, i.e., for $g=(g(x))_{x\in Q_0}\in \GL(\beta)$ and $V=(V(a))_{a \in Q_1} \in \rep(Q,\beta)$,  $g \cdot V$ is defined by \[(g\cdot V)(a)=g(ha)V(a) g(ta)^{-1}, \forall a \in Q_1.\]
Under our assumption that $Q$ has no oriented cycles, the ring of invariants $I(Q,\beta):=K[\rep(Q,\beta)]^{\GL(\beta)}$ is just the base field $K$. 

Let us now consider the commutator subgroup $\SL(\beta)=\prod_{x \in Q_0}\SL(\beta(x),K)$ of $\GL(\beta)$ and its action on $K[\rep(Q,\beta)]$. The resulting ring of semi-invariants $\SI(Q,\beta):=K[\rep(Q,\beta)]^{\SL(\beta)}$ is highly non-trivial. It has a weight space decomposition over the group $X^{\star}(\GL(\beta))$ of rational characters of $\GL(\beta)$:
\[\SI(Q,\beta)=\bigoplus_{\chi \in X^\star(\GL(\beta))}\SI(Q,\beta)_{\chi}.\]
For each character $\chi \in X^{\star}(\GL(\beta))$, 
\[\SI(Q,\beta)_{\chi}=\lbrace f \in K[\rep(Q,\beta)] \mid g f= \chi(g)f \text{~for all~}g \in \GL(\beta)\rbrace \] is called the space of semi-invariants on $\rep(Q,\beta)$ of weight $\chi$.

\begin{remark} Note that any $\theta \in \ZZ^{Q_0}$ defines a rational character $\chi_{\theta}:\GL(\beta) \to K^*$ by 
\begin{equation}
\chi_{\theta}((g(x))_{x \in Q_0})=\prod_{x \in Q_0}(\det g(x))^{\theta(x)}.
\end{equation}
In this way, we identify $\ZZ ^{Q_0}$ with $X^{\star}(\GL(\beta))$ whenever $\beta$ is a sincere dimension vector. In general, we have the natural epimorphism $\ZZ^{Q_0} \to X^{\star}(\GL(\beta))$. We also refer to the rational characters of $\GL(\beta)$ as (integral) weights of $Q$. 
\end{remark}

\begin{definition} Let $\theta$ be a weight of $Q$ and $V \in \rep(Q,\beta)$.
\begin{enumerate}
\item $V$ is called \emph{$\theta$-semi-stable} if there exists a semi-invariant $f \in \SI(Q,\beta)_{n \theta}$, with $n \geq 1$, such that $f(V) \neq 0$.

\item $V$ is called \emph{$\theta$-stable} if there exists a semi-invariant $f \in \SI(Q,\beta)_{n \theta}$, with $n \geq 1$, such that $f(V) \neq 0$,  $\dim \GL(\beta) V = \dim \GL(\beta)-1$, and the action of $\GL(\beta)$ on the principal open subset $\rep(Q,\beta)_f$ is closed.
\end{enumerate}
\end{definition}

In what follows, if $\gamma, \theta \in \ZZ^{Q_0}$, we define $\theta(\gamma):=\sum_{x \in Q_0} \theta(x) \gamma(x)$. We are now ready to state King's numerical criterion for semi-stability for quiver representations:

\begin{theorem} \cite[Theorem 4.1]{K} \label{King-general-thm} Let $V \in \rep(Q,\beta)$ and $\theta \in \ZZ^{Q_0}$ a weight of $Q$.
\begin{enumerate}
\item  $V$ is $\theta$-semi-stable if and only if $\theta(\dv V)=0$ and $\theta(\dv V^{\prime}) \leq 0$ for every subrepresentation $V' \leq V$.
\item  $V$ is $\theta$-stable if and only if $\theta(\dv V)=0$ and $\theta(\dv V^{\prime}) < 0$ for every proper subrepresentation $0 \neq  V'<V$.
\end{enumerate} 
\end{theorem}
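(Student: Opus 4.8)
The plan is to deduce both parts from the Hilbert--Mumford numerical criterion of geometric invariant theory, applied to the reductive group $G = \GL(\beta)$ acting on the affine space $X = \rep(Q,\beta)$ with the linearization of the trivial line bundle determined by the character $\chi_\theta$. First I would record the \emph{a priori} constraint $\theta(\dv V) = 0$: the diagonally embedded scalar torus $\Delta \cong K^{*}$ (acting by $t \mapsto (t\,\Id_{V(x)})_{x \in Q_0}$) acts trivially on $X$, whereas $\chi_\theta$ restricts on $\Delta$ to $t \mapsto t^{\theta(\beta)}$; hence any $f \in \SI(Q,\beta)_{n\theta}$ with $f(V) \neq 0$ forces $n\,\theta(\beta) = 0$, so $\theta(\dv V) = \theta(\beta) = 0$ is necessary for $\theta$-semi-stability. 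From here on I would assume $\theta(\dv V) = 0$ and work modulo $\Delta$.

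The heart of the argument is the translation of one-parameter subgroups into filtrations by subrepresentations. Given a one-parameter subgroup $\lambda \colon K^{*} \to \GL(\beta)$, each $V(x)$ decomposes into weight spaces $V(x) = \bigoplus_{n \in \ZZ} V(x)_n$ on which $\lambda(t)$ acts by $t^{n}$. A direct computation with the conjugation action $(\lambda(t)\cdot V)(a) = \lambda(t)(ha)\,V(a)\,\lambda(t)(ta)^{-1}$ shows that $\lim_{t \to 0} \lambda(t)\cdot V$ exists in $X$ if and only if, for every $p$, the subspaces $V_{\geq p}(x) := \bigoplus_{n \geq p} V(x)_n$ form a subrepresentation $V_{\geq p} \leq V$; thus the one-parameter subgroups with a limit correspond exactly to the $\ZZ$-filtrations of $V$ by subrepresentations. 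Computing the Hilbert--Mumford weight gives $\langle \chi_\theta, \lambda\rangle = \sum_{p} \theta\bigl(\dv V_{\geq p}\bigr)$ (after an Abel summation, using $\theta(\beta) = 0$ to kill the boundary terms), so the weight of $\lambda$ is a sum of the quantities $\theta(\dv V')$ over the proper nonzero subrepresentations $V'$ occurring in the associated filtration.

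With this dictionary in hand, part (1) follows in both directions. For sufficiency, if $\theta(\dv V') \leq 0$ for every subrepresentation then $\langle\chi_\theta,\lambda\rangle \leq 0$ for every one-parameter subgroup admitting a limit, and the Hilbert--Mumford criterion (together with $\theta(\dv V)=0$) yields that $V$ is $\theta$-semi-stable. For necessity, given a single subrepresentation $V' \leq V$ I would build the two-step filtration $0 \subset V' \subset V$ from a vector-space splitting $V(x) = V'(x) \oplus C(x)$, with $\lambda(t)$ acting by $t$ on $V'(x)$ and trivially on $C(x)$; then $\langle\chi_\theta,\lambda\rangle = \theta(\dv V')$, and $\theta$-semi-stability forces $\theta(\dv V') \leq 0$.

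For part (2), the same dictionary reduces $\theta$-stability to the refined Hilbert--Mumford criterion for properly stable points: $V$ is stable exactly when $\langle\chi_\theta,\lambda\rangle < 0$ for every one-parameter subgroup that admits a limit and does not lie in $\Delta$. Since a non-scalar $\lambda$ necessarily produces a nontrivial weight decomposition, and hence at least one proper nonzero subrepresentation $V_{\geq p}$ in its filtration, the strict inequalities $\theta(\dv V') < 0$ for all proper nonzero $V'$ are precisely equivalent to this condition; this in turn is equivalent to the $G$-orbit of $V$ being closed in the semi-stable locus with stabilizer equal to $\Delta$, i.e.\ to $\dim \GL(\beta)V = \dim\GL(\beta) - 1$ together with closedness of the action on the principal open set $\rep(Q,\beta)_f$. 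I expect the main obstacle to be exactly this last step: carefully matching the geometric stability conditions (closed orbit of maximal dimension, stabilizer $\Delta$) with the combinatorial condition on subrepresentations, in particular showing that a one-parameter subgroup of zero weight whose limit exists must have its filtration \emph{split} $V$ as a direct sum, so that a destabilizing subrepresentation with $\theta(\dv V') = 0$ would have to be a genuine direct summand --- the phenomenon that separates the stable from the merely semi-stable case.
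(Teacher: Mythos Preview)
The paper does not give its own proof of this statement: it is simply quoted as \cite[Theorem~4.1]{K} and used as a black box throughout. So there is nothing in the paper to compare your proposal against.

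That said, what you have sketched is essentially King's original argument in \cite{K}: reduce to the Hilbert--Mumford criterion for the $\GL(\beta)$-action linearized by $\chi_\theta$, identify one-parameter subgroups admitting a limit with $\ZZ$-filtrations of $V$ by subrepresentations, and compute the pairing $\langle\chi_\theta,\lambda\rangle$ as a sum of $\theta(\dv V_{\geq p})$. Your outline is correct in its main lines, including the observation that $\theta(\beta)=0$ is forced by the diagonal torus and the two-step filtration trick for the ``necessity'' direction. The one place you flag as delicate --- matching the geometric notion of stability (closed orbit of maximal dimension) with the strict numerical inequalities --- is indeed where the work lies, and King handles it exactly as you suggest, by showing that a one-parameter subgroup of weight zero with existing limit must have its limit lying in the orbit of $V$ (equivalently, the associated filtration splits). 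So your proposal is a faithful reconstruction of the cited proof, not an alternative to anything in the present paper.
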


\noindent
Given a weight $\theta$ of $Q$, we define $\rep(Q)^{ss}_{\theta}$ to be the full subcategory of $\rep(Q)$ consisting of all representations (including the zero representation) of $Q$ satisfying the list of homogeneous inequalities in Theorem \ref{King-general-thm}{(1)}. It turns out that $\rep(Q)^{ss}_{\theta}$ is an abelian subcategory of $\rep(Q)$ closed under extensions. Moreover, the simple objects of $\rep(Q)^{ss}_{\theta}$ are precisely the $\theta$-stable representations of $Q$.

If $\beta$ is a dimension vector of $Q$, $\rep(Q,\beta)^{ss}_{\theta}$ denotes the possibly empty (open) subset of $\rep(Q,\beta)$ consisting of $\theta$-semi-stable representations.

\subsection{Locally semi-simple quiver representations} \label{locally-semi-simple-sec} Let $Q$ be a quiver and $\beta$ a dimension vector of $Q$.

\begin{definition} \cite{Shm1} A representation $V \in \rep(Q, \beta)$ is said to be \emph{locally semi-simple} if the orbit $\SL(\beta)V$ is closed in $\rep(Q,\beta)$.
\end{definition}

In \cite{Kac}, Kac shows that any representation $W \in \rep(Q,\beta)$ has a Jordan decomposition of the form:
\[
W=V+N,
\]
where: 
\begin{itemize}
\item $V \in \rep(Q,\beta)$ is locally semi-simple; 

\item $N \in \rep(Q,\beta)$ is such that $\mathbf{0}_{\beta} \in \overline{\Stab_{\SL(\beta)}(V) \cdot N}$. Here $\mathbf{0}_{\beta}$ is the zero element of the vector space $\rep(Q,\beta)$. 
\end{itemize}

\noindent
So, the classification problem for quiver representations splits into two problems: (I) the classification of locally semi-simple quiver representations; and (II) the classification of nilpotent quiver representations with respect to certain algebraic subgroups of $\SL(\beta)$. 

The following result by Shmelkin provides a characterization of locally semi-simple quiver representations.

\begin{theorem} \cite{Shm1} \label{Shm-general-thm}
For a representation $V \in \rep(Q,\beta)$, the following statements are equivalent: 
\begin{enumerate}
\item $V$ is locally semi-simple;
\item there exists a semi-invariant $ 0 \neq f \in \SI(Q,\beta)_{\chi} $ such that $f(V) \neq 0$ and $\glb V$ is closed in $\rep(Q,\beta)_{f}$; 
\item there exists a character $\chi \in X^{*}( \glb)$ such that the orbit $\ker(\chi) V$ is closed in $\rep(Q,\beta)$.
\end{enumerate}
\end{theorem}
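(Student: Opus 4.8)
The plan is to treat this as a statement in geometric invariant theory for the reductive group $G := \GL(\beta)$ acting on the affine space $X := \rep(Q,\beta)$, with commutator subgroup $SG := \SL(\beta)$ and quotient torus $T := G/SG \cong (K^{*})^{Q_0}$. I would establish the cyclic chain $(1) \Rightarrow (2) \Rightarrow (3) \Rightarrow (1)$. Two of these implications rest on a single elementary observation together with a confinement remark. The observation: if $H \leq G$ is a closed subgroup that contains $SG$ as a normal subgroup (automatic, since $SG=[G,G]$ is normal in $G$) and $H\cdot v$ is closed in $X$, then $SG\cdot v$ is closed in $H\cdot v$, because $SG\cdot v$ is precisely the fiber through $v$ of the homogeneous fibration $H/H_v \to H/(SG\cdot H_v)$ onto a quotient of the torus $H/SG$, and fibers of such a fibration are closed. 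The confinement remark: a semi-invariant $f$ of weight $\chi$ is $\ker(\chi)$-invariant, so $\ker(\chi)\cdot v \subseteq \{f=f(v)\}$, a set closed in $X$ and contained in $X_f$ whenever $f(v)\neq 0$; this upgrades "closed in $X_f$" to "closed in $X$".

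For $(2)\Rightarrow(3)$ I would take $\chi$ to be the weight of the given semi-invariant $f$. Then $G\cdot v$ is closed in $X_f$ by hypothesis, $\ker(\chi)\cdot v$ is a closed fiber inside $G\cdot v$, and the confinement remark forces $\ker(\chi)\cdot v$ to be closed in all of $X$. For $(3)\Rightarrow(1)$ I would apply the fibration observation with $H=\ker(\chi)$: since $\ker(\chi)\cdot v$ is closed in $X$ and $SG$ is a normal subgroup of $\ker(\chi)$, the orbit $SG\cdot v$ is a closed fiber of $\ker(\chi)/H_v \to \ker(\chi)/(SG\cdot H_v)$, hence closed in $X$.

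The substantive implication is $(1)\Rightarrow(2)$, and this is where I expect the real work. I would pass to the categorical quotient $\pi\colon X \to Y := \Spec\SI(Q,\beta)$ by $SG$, on which $T$ acts; acyclicity of $Q$ gives $K[Y]^{T}=K[X]^{G}=K$, so $Y$ is a cone with a single $T$-fixed point and $\SI(Q,\beta)$ is graded by $X^{*}(T)\cong \ZZ^{Q_0}$. Writing $y:=\pi(v)$, I would form the weight monoid $\Omega := \{\theta : f(v)\neq 0 \text{ for some } f\in \SI(Q,\beta)_{\chi_\theta}\}$ and choose $\theta_0$ in the relative interior of $\cone(\Omega)$ together with a semi-invariant $f_0$ of weight $\chi_{\theta_0}$ with $f_0(v)\neq 0$. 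Since $SG\cdot v$ is closed, the fibration observation identifies the $SG$-orbits in $G\cdot v$ with the fibers of $\pi|_{G\cdot v}$, so that $G\cdot v$ is closed in $X_{f_0}=\pi^{-1}(Y_{f_0})$ if and only if $T\cdot y$ is closed in $Y_{f_0}$.

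The remaining point, and the main obstacle, is the torus statement: for $T$ acting on the affine cone $Y$ with $y$ of weight monoid $\Omega$ and $\theta_0\in \rel(\cone(\Omega))$, the orbit $T\cdot y$ is closed in $Y_{f_0}$. I would prove this via the Hilbert--Mumford criterion: a one-parameter subgroup $\lambda$ of $T$ has $\lim_{t\to 0}\lambda(t)\cdot y$ existing inside $Y_{f_0}$ only if $\langle\theta,\lambda\rangle\geq 0$ for all $\theta\in\Omega$ (every nonvanishing semi-invariant stays bounded) and $\langle\theta_0,\lambda\rangle=0$ (so $f_0$ does not tend to $0$); because $\theta_0$ lies in the relative interior of $\cone(\Omega)$, these conditions force $\lambda$ to annihilate every weight of $\Omega$, whence the limit lies in $T\cdot y$. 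By the Kempf--Hilbert--Mumford theory of closed orbits this yields closedness of $T\cdot y$ in $Y_{f_0}$, hence of $G\cdot v$ in $X_{f_0}$, completing $(1)\Rightarrow(2)$. The two delicate ingredients I would still need to pin down are the clean transfer of closedness across the $SG$-quotient $\pi$ and the existence of a relative-interior weight $\theta_0$ represented by a semi-invariant that does not vanish at $v$.
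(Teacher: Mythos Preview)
The paper does not supply a proof of this theorem: it is quoted from Shmelkin \cite{Shm1} and used as a black box (immediately afterwards, in the proof of Theorem~\ref{locally-semi-simple-gen-theorem}). So there is nothing in the present paper to compare your argument against.

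On the merits of your proposal: the implications $(2)\Rightarrow(3)$ and $(3)\Rightarrow(1)$ are correct as written. In both cases $\ker(\chi)\supseteq\SL(\beta)$ because $\chi$ factors through the abelianization, the quotient $\ker(\chi)/\SL(\beta)$ is a closed subgroup of the torus $T$ and hence affine, and the orbit map you describe therefore has closed fibres; the confinement remark then upgrades ``closed in $X_f$'' to ``closed in $X$''.

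For $(1)\Rightarrow(2)$ the two gaps you flag are real, but of very different weights. The first is routine: pick finitely many monoid generators $\theta_1,\dots,\theta_n$ of $\Omega$ with witnesses $f_i(v)\neq 0$, and set $f_0=\prod_i f_i$, so that $\theta_0=\sum_i\theta_i\in\rel(\cone\Omega)$. The second, the ``clean transfer of closedness across $\pi$'', is where the actual content lies, and your fibration observation alone does not bridge it. Knowing that $T\cdot y$ is closed in $Y_{f_0}$ and that the $SG$-orbits inside $G\cdot v$ coincide with the fibres of $\pi|_{G\cdot v}$ only tells you that $G\cdot v$ is saturated for $\pi$ over $T\cdot y$; it does not rule out a point $w\in\overline{G\cdot v}\cap\pi^{-1}(y)$ lying outside $G\cdot v$. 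Your Hilbert--Mumford argument does give $\pi(w)=\pi(v)$ for any such limit $w$, but you still need a further step (for instance, an orbit-dimension comparison using that every $SG$-orbit in $G\cdot v$ is a $G$-translate of the closed orbit $SG\cdot v$, together with the uniqueness of the closed $SG$-orbit in $\pi^{-1}(y)$) to force $w\in SG\cdot v$. The strategy is sound, but this last step is not yet on the page.
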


Combining King's results on semi-stability and Theorem \ref{Shm-general-thm}, we have: 

\begin{theorem} \label{locally-semi-simple-gen-theorem} Let $Q$ be a quiver, $\beta$ a dimension vector of $Q$, and $V \in \rep(Q,\beta)$. Let 
\[
V \simeq \bigoplus_{i=1}^r V_i^{m_i}
\]
be a decomposition of $V$ into pairwise non-isomorphic indecomposable representations $V_1, \ldots, V_r,$ with multiplicities $m_1, \ldots, m_r \geq 1$. Then the following are equivalent:
\begin{enumerate}
\item $V$ is locally semi-simple;

\item there exists a common weight $\theta$ of $Q$ such that each $V_i$ is $\theta$-stable. 
\end{enumerate}
\end{theorem}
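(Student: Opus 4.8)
The plan is to recast both statements in the language of GIT using Shmelkin's Theorem~\ref{Shm-general-thm} together with King's numerical criterion (Theorem~\ref{King-general-thm}), thereby reducing the equivalence to the principle that, inside the $\theta$-semi-stable locus, the closed $\GL(\beta)$-orbits are exactly those of the \emph{$\theta$-polystable} representations -- the direct sums of $\theta$-stable representations, equivalently the semi-simple objects of the abelian category $\rep(Q)^{ss}_{\theta}$.

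For $(2)\Rightarrow(1)$, I would proceed as follows. Assuming a common weight $\theta$ renders each $V_i$ $\theta$-stable, Theorem~\ref{King-general-thm} places each $V_i$ in $\rep(Q)^{ss}_{\theta}$; since this category is abelian and closed under extensions, $V=\bigoplus_i V_i^{m_i}$ is again $\theta$-semi-stable, so some $f\in\SI(Q,\beta)_{n\theta}$ with $n\ge1$ satisfies $f(V)\ne0$. As the $V_i$ are precisely the simple objects occurring in $V$, the representation $V$ is a semi-simple object of $\rep(Q)^{ss}_{\theta}$, hence $\theta$-polystable, and I would conclude that $\GL(\beta)V$ is closed in $\rep(Q,\beta)_{f}$. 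Shmelkin's Theorem~\ref{Shm-general-thm} then yields that $V$ is locally semi-simple.

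For the converse $(1)\Rightarrow(2)$, I would feed local semi-simplicity into Shmelkin's Theorem to obtain a semi-invariant $0\ne f\in\SI(Q,\beta)_{\chi}$ with $f(V)\ne0$ and $\GL(\beta)V$ closed in $\rep(Q,\beta)_{f}$. Writing $\chi=\chi_{\theta}$ for a weight $\theta$ (via the epimorphism $\ZZ^{Q_0}\to X^{\star}(\GL(\beta))$) and evaluating $\chi$ on the scalar one-parameter subgroup $t\mapsto(t\,\Id)_{x}$ forces $\theta(\beta)=0$, while $f(V)\ne0$ shows $V\in\rep(Q)^{ss}_{\theta}$. Closedness of the orbit would then make $V$ $\theta$-polystable, say $V\cong\bigoplus_j W_j$ with each $W_j$ $\theta$-stable, hence Schur and indecomposable; by Krull--Schmidt the $W_j$ are exactly the indecomposable summands $V_i$, so every $V_i$ is $\theta$-stable.

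The step I expect to be the main obstacle is the closed-orbit/polystable correspondence on the affine $\GL(\beta)$-variety $\rep(Q,\beta)_{f}$. The mechanism I would use is a one-parameter degeneration: given a short exact sequence $0\to V'\to V\to V''\to0$ in $\rep(Q)^{ss}_{\theta}$ (so that $\theta(\dv V')=0$), I would split $V(x)=V'(x)\oplus U(x)$ and let $\lambda(t)\in\GL(\beta)$ act by $t$ on each $V'(x)$ and trivially on $U(x)$, so that $\lim_{t\to0}\lambda(t)\cdot V=V'\oplus V''$ while $\chi_{n\theta}(\lambda(t))=t^{\,n\theta(\dv V')}=1$; thus $f$ stays constant and the entire path, limit included, remains in $\rep(Q,\beta)_{f}$. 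Taking the sequence non-split when $V$ is not polystable exhibits a point of $\overline{\GL(\beta)V}$ outside the orbit, so a closed orbit forces $V$ to be polystable; conversely, iterating along a $\theta$-Jordan--H\"older filtration shows every semi-stable orbit degenerates to its polystable associated graded, the unique closed orbit in its fibre of the quotient $\rep(Q,\beta)_{f}/\!\!/\GL(\beta)$, so polystable representations have closed orbit. The delicate points will be checking that these degenerations stay inside $\rep(Q,\beta)_{f}$ (handled by the weight computation) and that S-equivalent representations share a fibre, for which I would invoke the precise results of King~\cite{K}.
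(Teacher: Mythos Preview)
Your proposal is correct and follows the same overall architecture as the paper's proof: both directions hinge on Shmelkin's Theorem~\ref{Shm-general-thm} together with King's identification of closed $\GL(\beta)$-orbits in the semi-stable locus with $\theta$-polystable representations. The differences are largely in packaging. For $(1)\Rightarrow(2)$ the paper invokes part~(3) of Theorem~\ref{Shm-general-thm} (closedness of $\ker(\chi_\theta)V$) and then cites \cite[Propositions~2.6(i) and~3.2(i)]{K} to reduce $\theta$-stability of the $V_i$ to the Hilbert--Mumford statement that every one-parameter subgroup of $\ker(\chi_\theta)$ with existing limit lands back in $\GL(\beta)V$; this follows in one line from closedness. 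You instead use part~(2) of Theorem~\ref{Shm-general-thm} and argue that a closed orbit in $\rep(Q,\beta)_f$ forces $\theta$-polystability, which you then justify by an explicit one-parameter degeneration along a non-split filtration---this is essentially a sketch of King's argument rather than a citation of it. For $(2)\Rightarrow(1)$ the two proofs are nearly identical, the paper again citing \cite[Proposition~3.2(i)]{K} for closedness of the orbit while you spell out the polystable side. Your version is more self-contained; the paper's is shorter by treating King's results as a black box.
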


\begin{proof}
($\Rightarrow$) We know from Theorem \ref{Shm-general-thm} that there exists a character $\chi \in X^{*}(\GL(\beta))$ such that $\ker(\chi) V$ is closed in $\rep(Q,\beta)$. Choose a weight $\theta \in \ZZ^{Q_0}$ such that $\chi=\chi_{\theta}$. We will show that each $V_i$ is $\theta$-stable. According to \cite[Propositions 2.6(i) and 3.2(i)]{K}, this is equivalent to checking that for every one parameter subgroup $\lambda \in X_{*}(\ker(\chi_{\theta}))$, $\lim_{t \to 0} \lambda(t)V \in \glb V$, whenever the limit exists. But this is clear since for any such $\lambda$, $\lim_{t \to 0} \lambda(t)V \in \overline{\ker(\chi_{\theta})V}=\ker(\chi_{\theta})V \subseteq \GL(\beta)V$.\\

\noindent
($\Leftarrow$) It follows from \cite[Proposition 3.2(i)]{K} that $\GL(\beta)V$ is closed in $\rep(Q,\beta)^{ss}_{\theta}$ and $V$ is $\theta$-semi-stable. Choose $f \in \SI(Q,\beta)_{N\theta}$ with $N \geq 1$ such that $f(V) \neq 0$. Then, $\GL(\beta)V$ remains closed in $\rep(Q,\beta)_f$, and so $V$ is locally semi-simple by Theorem \ref{Shm-general-thm}.
\end{proof}

\begin{remark} In light of Theorem \ref{locally-semi-simple-gen-theorem}, we say that a representation $V \in \rep(Q)$ is \emph{locally semi-simple} if there exists a weight $\theta \in \ZZ^{Q_0}$ such that the indecomposable direct summands of $V$ are all $\theta$-stable. 
\end{remark}

\begin{example} 
For a Dynkin quiver $Q$, any indecomposable representation $V \in \rep(Q)$ is locally semi-simple since any such $V$ is stable with respect to $\theta=\langle \dv V, \cdot \rangle - \langle \cdot, \dv V \rangle$.  
\end{example}

\begin{remark} Theorem \ref{locally-semi-simple-gen-theorem} tells us that for any locally semi-simple representation $V$ of $Q$, $\End_Q(V)$ is a semi-simple algebra. Indeed, let $V_1, \ldots, V_r$ be the pairwise non-isomorphic indecomposable direct summands of $V$ with multiplicities $m_1, \ldots, m_r \geq 1$. Then: (1) each $V_i$ is Schur since any stable representation is Schur; and (2) $\Hom_Q(V_i,V_j)=0$ for all $1 \leq i \neq j \leq r$ since a homomorphism between two stable representations is either zero or an isomorphism. Consequently, we get that
\[
\End_Q(V) \simeq \prod_{i=1}^r \Mat_{m_i \times m_i}(K),
\]
i.e. $\End_Q(V)$ is a semi-simple ring.
\end{remark}

\begin{definition} A sequence of representations $V_1, \ldots, V_r$ is said to be an \emph{orthogonal Schur sequence} if the representations $V_i$ are Schur and $\Hom_Q(V_i,V_j)=0$ for all $1 \leq i \neq j \leq r$. (Representations satisfying the second condition are called \emph{mutually orthogonal}.) 
\end{definition}

\begin{theorem} \label{ssalg}
Let $A$ be a $K$-algebra and $V$ an $A$-module. Let 
\[V \cong \bigoplus_{i=1}^{r} V_i^{m_i}\]
be a decomposition of $V$ into pairwise non-isomorphic indecomposable $A$-modules $V_1, \ldots, V_r$ with multiplicities $m_1, \ldots, m_r \geq 1$. Then $\text{End}_{A}(V)$ is a semi-simple $K$-algebra if and only if  $V_1, \ldots, V_r$ form an orthogonal Schur sequence. 
\end{theorem}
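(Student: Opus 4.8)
The plan is to set $E=\End_A(V)$, note that finite-dimensionality of $V$ makes $E$ a finite-dimensional $K$-algebra, and reduce everything to the statement that $E$ is semi-simple if and only if $\rad(E)=0$. For each $i$ I would record two facts about $D_i:=\End_A(V_i)$: since $V_i$ is indecomposable of finite length, Fitting's lemma makes $D_i$ local with maximal ideal $\mathfrak m_i=\rad(D_i)$ equal to the set of non-isomorphisms; and since $D_i/\mathfrak m_i$ is a finite-dimensional division algebra over the algebraically closed field $K$, it is isomorphic to $K$. The backward implication is then immediate: if the $V_i$ form an orthogonal Schur sequence, then $\Hom_A(V_i,V_j)=0$ for $i\neq j$ and $D_i\cong K$, so the block decomposition of $E$ collapses to $E\cong\prod_{i=1}^r\Mat_{m_i\times m_i}(K)$, which is semi-simple.

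For the forward implication I would assume $\rad(E)=0$ and work with idempotent corners. First, to see that each $V_i$ is Schur, let $e_i\in E$ be the projection onto one fixed copy of $V_i$, so that $e_iEe_i\cong D_i$. Using the standard identity $\rad(e_iEe_i)=e_i\,\rad(E)\,e_i$, I obtain $\rad(D_i)=0$; a local algebra with vanishing radical is a division algebra, so $D_i\cong K$ and $V_i$ is Schur.

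The hard part is orthogonality, and this is where I expect the main obstacle to lie. Fixing $i\neq j$ and setting $e=e_i+e_j$, the corner $eEe\cong\End_A(V_i\oplus V_j)$ is again semi-simple, and by the previous step its diagonal blocks are $K$, with off-diagonal blocks $\Hom_A(V_j,V_i)$ and $\Hom_A(V_i,V_j)$. The key observation I would use is that any composite $V_i\to V_j\to V_i$ lies in $\End_A(V_i)=K$ and cannot be invertible: otherwise one of the two factors would be a split monomorphism, exhibiting $V_i$ as a direct summand of the indecomposable $V_j$ and forcing $V_i\cong V_j$, a contradiction. Hence every such cross-composite (and likewise every $V_j\to V_i\to V_j$) is a non-unit of $K$, i.e. zero. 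This makes the off-diagonal matrices a two-sided ideal $I$ of $eEe$ with $I^2=0$; being nilpotent, $I\subseteq\rad(eEe)=0$, so $\Hom_A(V_i,V_j)=\Hom_A(V_j,V_i)=0$ and the $V_i$ are mutually orthogonal. The crux is precisely this: any nonzero morphism between distinct indecomposable summands would generate a square-zero ideal in $E$, which semi-simplicity forbids.

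As a cross-check I would note that the two implications can be packaged as a single dimension count: one always has $E/\rad(E)\cong\prod_{i=1}^r\Mat_{m_i\times m_i}(K)$, so $\dim_K E\geq\sum_i m_i^2$, with equality — that is, $\rad(E)=0$ — holding exactly when $\dim_K\End_A(V_i)=1$ for all $i$ and $\Hom_A(V_i,V_j)=0$ for all $i\neq j$, which is the orthogonal Schur condition.
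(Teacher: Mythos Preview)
Your argument is correct. Both directions are sound: the backward direction is identical to the paper's, and for the forward direction your use of Fitting's lemma, the corner identity $\rad(eEe)=e\,\rad(E)\,e$, and the square-zero ideal built from cross-composites is valid (the key point that a composite $V_i\to V_j\to V_i$ in $K$ cannot be a unit, lest $V_i$ split off the indecomposable $V_j$, is exactly right).

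The paper takes a closely related but technically different route: rather than working corner by corner, it invokes a structural result of Brion giving a surjective algebra map $u:\End_A(V)\to\prod_i\Mat_{m_i}(K)$ whose kernel is nilpotent, and then observes that semi-simplicity forces $\ker u=0$, which in one stroke kills all off-diagonal blocks and all nilpotent parts of the diagonal blocks. Your final cross-check paragraph is essentially this same statement, since $\ker u=\rad(E)$. The difference is that you prove the needed pieces (locality of $\End_A(V_i)$, vanishing of cross-composites) by hand using Fitting's lemma and the corner-radical identity, whereas the paper outsources this structure to the cited lemma. Your approach is more self-contained; the paper's is shorter once the citation is granted.
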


\begin{proof}
Let us write $\End_A(V)$ in block-matrix form:
\begin{equation*}  \displaystyle \text{End}_{A}(V) \cong \left(\text{Mat}_{m_{i} \times m_{j}} \left (\Hom_{A}(V_j, V_i) \right ) \right)_{i,j}.  
\end{equation*}

By \cite[ Lemma 1.3.3]{brion}, we know that:
\[ \End_A(V_i) = I_i \oplus K Id_{V_i},  \forall 1 \leq i \leq r,\]
where $I_i$ is a nilpotent ideal of $\End_A(V_i)$. This decomposition induces an algebra homomorphism $u_i:\Mat_{m_i \times m_i}\left( \End_A(V_i) \right) \to \Mat_{m_i\times m_i}(K)$ for each $1 \leq i \leq r$. Next, set up the function:
\[ u:\End_A(V) \to \prod_{i=1}^r \Mat_{m_i \times m_i}(K),\]
defined by $u((f_{i,j}))=(u_1(f_{1,1}), \ldots, u_r(f_{r,r}))$ for all $(f_{i,j}) \in \End_A(V)$. It is proved in \cite[Theorem 1.3.4]{brion} that $u$ is a surjective morphism of algebras with $\ker u$ a nilpotent ideal of $\End_A(V)$.\\
 
\noindent
($\Rightarrow$) If $\text{End}_{A}(V)$ is semi-simple then $\ker(u)=0$. This clearly implies that $\Hom_A(V_j, V_i) = 0$ for all $i \neq j$. Moreover, any nilpotent ideal of $\End_A(V_i)$ gives rise to a nilpotent ideal of $\text{End}_{A}(V)$. So we must have that $\text{End}_{A}(V_i) \cong K$ for each $1 \leq i \leq r$. \\

\noindent
($\Leftarrow$) If $ V_1, \ldots, V_r $ forms an orthogonal Schur sequence, then 
\[\End_{A}(V) \cong  \prod_i \Mat_{m_i \times m_i}(\Hom_{A}{(V_i, V_i)}) \cong \prod_{i} \Mat_{m_i \times m_i}(K). \] 
So $\text{End}_{A}(M)$ is a semi-simple ring by the Artin-Wedderburn Theorem.
\end{proof}

\section{The tame case} \label{tame-sec}
Kac's problem reduces to the following: For an orthogonal Schur sequence of representations of a tame quiver, can we find a common weight such that each representation is stable with respect to this weight?  

In \cite{DW1}, the authors use a generalization of orthogonal exceptional sequences to describe the faces of the so called  cones of effective weights associated to arbitrary (acyclic) quivers. Using their result, we are able to find a common stability weight for orthogonal exceptional sequences. It turns out that when at least one non-regular representation is present, orthogonal Schur sequences can be rearranged to form exceptional sequences (see Sections \ref{Dynkin-sec}, \ref{non-regular-sec}, and \ref{mixed-sec}). This technique fails for orthogonal Schur sequences of regular representations, so further analysis is needed in this case (see Section \ref{regular-sec}).

\begin{definition} \label{exceptional}
\begin{enumerate}
\item A representation $V$ is called \emph{exceptional} if $V$ is Schur and $\Ext_{Q}^{1}(V,V)=0$.  
\item 
Let $\mathcal{L}=(V_{1}, \ldots, V_r)$ be a sequence of exceptional representations. $\mathcal{L}$ is called an \emph{exceptional sequence} if $ \Hom_{Q}(V_{i}, V_{j})=\Ext_{Q}^{1}(V_{i}, V_{j}) = 0 \mbox{ for } i < j$.  If, in addition,  $\Hom_{Q}(V_{i}, V_{j})=0$ for all $i \neq j$, $\mathcal{L}$ is called an \emph{orthogonal exceptional sequence}. 
\end{enumerate}
\end{definition}

Let $\mathcal{L}=(V_1, \ldots, V_r)$ be an orthogonal exceptional sequence of representations of a quiver $Q$. If $\beta_1, \ldots, \beta_r$ are the dimension vectors of $V_1, \ldots, V_r$ then $(\beta_1, \ldots, \beta_r)$ is a quiver Schur sequence in the terminology of \cite{DW1}. According to Theorem 5.1 in \cite{DW1}, the sequence $(\beta_1, \ldots, \beta_r)$ corresponds to a unique face of the cone of effective weights associated to $Q$ and $\beta:=\beta_1+\ldots+\beta_r$. Moreover, if $\theta$ is a lattice point of $\mathcal F$ then each $\beta_i$ is $\theta$-stable. Since in our set-up $\beta_i$ is a real Schur root, the only, up to isomorphism, $\theta$-stable $\beta_i$-dimensional representation is $V_i$, $1 \leq i \leq r$. Hence, as an immediate consequence of \cite[Theorem 5.1]{DW1}, we get:

\begin{prop} \label{qsswt} Let $Q$ be a quiver and $\mathcal{L}=(V_1, \ldots, V_r)$ an orthogonal exceptional sequence of representations of $Q$. Then there exists a weight $\theta$ such that $V_i$ is $\theta$-stable for all $1 \leq i \leq r$.
\end{prop}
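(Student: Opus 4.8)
The plan is to obtain the statement as a direct application of the description of faces of the cone of effective weights in \cite[Theorem 5.1]{DW1}, so the real content is translating the hypotheses into the language of \emph{loc.\ cit.} and then upgrading a statement about dimension vectors to one about the specific representations $V_i$.

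First I would pass from the representations to their dimension vectors $\beta_i := \ddim V_i$. Since $\mathcal{L}=(V_1,\dots,V_r)$ is an orthogonal exceptional sequence, each $V_i$ is Schur with $\Ext^1_Q(V_i,V_i)=0$, so $\beta_i$ is a real Schur root; and the vanishings $\Hom_Q(V_i,V_j)=0$ for all $i\neq j$ together with $\Ext^1_Q(V_i,V_j)=0$ for $i<j$ are precisely the conditions making $(\beta_1,\dots,\beta_r)$ a quiver Schur sequence in the sense of \cite{DW1}. The write-up step here is simply to check that the orthogonality and exceptionality data line up with that definition.

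Next I would apply \cite[Theorem 5.1]{DW1} to $\beta := \beta_1 + \dots + \beta_r$. That theorem assigns to the quiver Schur sequence $(\beta_1,\dots,\beta_r)$ a face $\mathcal{F}$ of the cone $\Eff(Q,\beta)$ of effective weights with the property that any lattice point $\theta$ of $\mathcal{F}$ is a \emph{single} weight for which each $\beta_i$ is $\theta$-stable, i.e.\ the generic $\beta_i$-dimensional representation is $\theta$-stable. I would fix such a lattice point $\theta \in \ZZ^{Q_0}$ as the common weight; this is exactly where the required simultaneity comes from.

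The last step is to replace ``$\beta_i$ is a $\theta$-stable dimension vector'' by ``$V_i$ is $\theta$-stable''. Here I would use that $\beta_i$ is a real Schur root: its exceptional representation is unique up to isomorphism and its $\GL(\beta_i)$-orbit is dense in $\rep(Q,\beta_i)$, while the $\theta$-stable locus is open and, by the previous step, nonempty. Hence the open stable locus meets the dense orbit, so the unique (up to isomorphism) $\theta$-stable representation of dimension $\beta_i$ is $V_i$ itself. The hard part is not a computation but the bookkeeping of the two reductions above: confirming that an orthogonal exceptional sequence matches the precise definition of a quiver Schur sequence in \cite{DW1} (including the directionality of the $\Hom$ and $\Ext^1$ conditions), and that the genericity conclusion of \cite[Theorem 5.1]{DW1} is strong enough---via real-Schur-root uniqueness---to pin down the specific representation $V_i$ rather than merely some representation of dimension $\beta_i$.
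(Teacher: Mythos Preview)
Your proposal is correct and follows essentially the same route as the paper: pass to the dimension vectors $\beta_i$, observe that $(\beta_1,\dots,\beta_r)$ is a quiver Schur sequence in the sense of \cite{DW1}, invoke \cite[Theorem~5.1]{DW1} to obtain a face $\mathcal{F}$ of the cone of effective weights for $\beta=\sum_i\beta_i$ whose lattice points $\theta$ make each $\beta_i$ $\theta$-stable, and then use that $\beta_i$ is a real Schur root to conclude that $V_i$ itself is the (unique up to isomorphism) $\theta$-stable $\beta_i$-dimensional representation. Your final step is slightly more explicit than the paper's (you spell out the open-stable-locus-meets-dense-orbit argument), but the content is the same.
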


\begin{remark}
From Proposition \ref{qsswt} and Theorem \ref{locally-semi-simple-gen-theorem}, it follows that direct sums of representations whose dimension vectors form orthogonal exceptional sequences are locally semi-simple. Similar results can be found in \cite{Shm2}.

\end{remark}

\subsection{Dynkin quivers} \label{Dynkin-sec}

\begin{prop}\label{dynkin} Let $Q$ be a Dynkin quiver and $\mathcal{L} = (V_1, \ldots, V_r)$ an orthogonal Schur sequence. Then $\mathcal{L}$ can be arranged to form an orthogonal exceptional sequence. \end{prop}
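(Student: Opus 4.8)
The plan is to observe that on a Dynkin quiver the ``Schur'' and ``exceptional'' conditions essentially coincide, so that the only real content of the statement is finding the correct ordering. First I would note that each $V_i$, being Schur, has $\End_Q(V_i)\cong K$ and is therefore indecomposable; hence $\ddim V_i$ is a positive root and the positive-definite Tits form gives $q(\ddim V_i)=1$. Combining this with the standard homological interpretation of the Euler form, $\langle \ddim V_i,\ddim V_i\rangle=\dim\Hom_Q(V_i,V_i)-\dim\Ext_Q^1(V_i,V_i)$, forces $\Ext_Q^1(V_i,V_i)=0$, so every $V_i$ is automatically exceptional. Since mutual $\Hom$-orthogonality is part of the hypothesis, the only thing left to arrange is a linear order in which $\Ext_Q^1(V_i,V_j)=0$ whenever $V_i$ precedes $V_j$.

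Next I would translate the orthogonality into Ext data. Because $\Hom_Q(V_i,V_j)=0$ for $i\neq j$, the Euler form reads $\langle \ddim V_i,\ddim V_j\rangle=-\dim\Ext_Q^1(V_i,V_j)\leq 0$. I then encode the obstruction as a directed graph $\Gamma$ on $\{1,\ldots,r\}$ with an arrow $i\to j$ precisely when $\Ext_Q^1(V_i,V_j)\neq 0$. An ordering producing an exceptional sequence is exactly a linear order of the vertices in which every arrow of $\Gamma$ points backward, and such an order exists if and only if $\Gamma$ is acyclic. Thus the whole problem reduces to proving that $\Gamma$ contains no directed cycle.

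The crux, and the step I expect to be the main obstacle, is this acyclicity, which I would derive from positive-definiteness of $q$. Suppose toward a contradiction that $V_{i_1}\to V_{i_2}\to\cdots\to V_{i_k}\to V_{i_1}$ is a directed cycle, which we may take to be simple, so the indices $i_1,\ldots,i_k$ are distinct and $k\geq 2$. Put $\alpha=\sum_{s=1}^{k}\ddim V_{i_s}$. Expanding the Tits form, $q(\alpha)=\sum_{s}q(\ddim V_{i_s})+\sum_{s\neq t}\langle \ddim V_{i_s},\ddim V_{i_t}\rangle$. The $k$ diagonal terms each equal $1$, contributing $k$. Each of the $k$ distinct ordered ``cyclic'' pairs $(i_s,i_{s+1})$ carries $\Ext_Q^1\neq 0$, hence $\langle \ddim V_{i_s},\ddim V_{i_{s+1}}\rangle\leq -1$, contributing at most $-k$; every remaining off-diagonal term is $\leq 0$. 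Therefore $q(\alpha)\leq k-k=0$. But $\alpha$ is a nonzero sum of positive roots and $q$ is positive-definite and integer-valued, so $q(\alpha)\geq 1$, a contradiction. Hence $\Gamma$ is acyclic.

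Finally I would topologically sort $\Gamma$ and reindex the $V_i$ accordingly, so that $\Ext_Q^1(V_i,V_j)=0$ for all $i<j$; together with the facts that each $V_i$ is exceptional and that $\Hom_Q(V_i,V_j)=0$ for all $i\neq j$, this exhibits the reordered sequence as an orthogonal exceptional sequence, as required. Everything outside the acyclicity argument is routine bookkeeping with the Euler form; it is precisely the positive-definiteness of $q$ that makes the cycle-summation work, which is where the Dynkin hypothesis is essential and why (as the paper signals) the same strategy must fail for orthogonal Schur sequences of regular representations over tame quivers, where $q$ is only positive semi-definite.
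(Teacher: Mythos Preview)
Your proof is correct and follows essentially the same route as the paper: define the ``$\Ext$-quiver'' on $\{1,\ldots,r\}$, use positive definiteness of the Tits form together with $\Hom$-orthogonality to rule out directed cycles via the inequality $q\bigl(\sum_s \ddim V_{i_s}\bigr)\leq k-k=0$, and then topologically sort. Your write-up is slightly more explicit than the paper's in two harmless places---you spell out why each $V_i$ is exceptional (hence no loops) and why one may take the cycle to be simple---but the argument is otherwise the same.
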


\begin{proof} Let $T$ be the quiver with $T_{0}=\{1, \ldots, r\}$, and an arrow from $i$ to $j$ if and only if $\Ext^{1}_{Q}(V_{i}, V_{j}) \neq 0$.   We will show that $T$ is acyclic. Assume for a contradiction that there exists an oriented cycle $a_{1} \ldots a_{\ell}$ in $T$. Then: 
\begin{equation} \label{eq-ext}
\Ext_{Q}^{1}(V_{ta_{1}}, V_{ta_{2}}) \neq 0, \ldots,\Ext_{Q}^{1}(V_{ta_{\ell-1}}, V_{ta_{\ell}}) \neq 0, \Ext_{Q}^{1}(V_{ta_{\ell}}, V_{ta_{1}}) \neq 0.
\end{equation}

Let $\alpha_i = \dv V_i, 1 \leq i \leq r$. Then:
\begin{equation} \label{eq1} q(\alpha_{ta_{1}} + \ldots + \alpha_{ta_{\ell}}) = \sum _{i \neq j} \langle \alpha_{ta_{i}}, \alpha_{ta_{j}} \rangle + \sum_{i=1}^{\ell} q(\alpha_{ta_i}) = \sum _{i \neq j} \langle \alpha_{ta_{i}}, \alpha_{ta_{j}} \rangle + \ell. 
\end{equation}

We know that $\la \alpha_{ta_i}, \alpha_{ta_j} \ra \leq 0$ for any $i\neq j$ since the representations $V_i$ are mutually orthogonal. Hence:
\[
q(\alpha_{ta_{1}} + \ldots + \alpha_{ta_{\ell}}) \leq \sum_{i=1}^{\ell-1} \langle \alpha_{ta_{i}}, \alpha_{ta_{i+1}} \rangle + \langle \alpha_{ta_{\ell}}, \alpha_{ta_{1}}\rangle +\ell.
\]

Using $(\ref{eq-ext})$, we get that $\la \alpha_{ta_i}, \alpha_{ta_{i+1}} \ra \leq -1$ for all $1 \leq i \leq \ell$, where $ta_{\ell+1}=ta_1$. So, $q(\alpha_{ta_{1}} + \ldots + \alpha_{ta_{\ell}}) \leq  0$. But this impossible since $q$ is positive definite as $Q$ is assumed to be Dynkin. Thus $T$ is acyclic.

It is well known that when $T$ has no oriented cycles we can order the vertices of $T$ such that $ta > ha$ for each $a \in T_1$. In particular, if $i < j$, then $\Ext^1_Q(V_i, V_j)=0$ (otherwise there exists an arrow $i \rightarrow j$ in $T$, which would imply $i > j$). 
\end{proof}

\subsection{Euclidean quivers} Throughout this section we assume that $Q$ is a Euclidean quiver and denote by $\delta$ the unique imaginary Schur root of $Q$. 

Given a representation $V$, the \emph{defect} of $V$ is the constant $\langle \delta, \dv V \rangle$. A representation $V$ is called \emph{preprojective}, \emph{regular}, or \emph{preinjective} precisely when all indecomposable direct summands of $V$ have negative, zero, or positive defect respectively.

\begin{lemma} \label{homext} \cite{CB}  Let $X,Y \in \rep(Q)$ be indecomposable representations. 
\begin{enumerate}
	\item If $Y$ is preprojective and $X$ is not, then $\Hom_Q(X,Y) = 0$ and $\Ext_{Q}^{1}(Y,X)=0$.
	\item If $Y$ is preinjective and $X$ is not, then $\Hom_Q(Y,X)=0$ and $\Ext_{Q}^{1}(X,Y)=0$.
\end{enumerate}
\end{lemma}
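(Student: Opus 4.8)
The plan is to reduce both parts to a single elementary fact about maps into projective modules, using the Auslander--Reiten translate $\tau$ together with the observation that the defect is a $\tau$-invariant. I will prove part (1) in full and deduce part (2) from it by the standard $K$-duality, which identifies $\rep(Q)$ with the opposite of $\rep(Q^{\mathrm{op}})$, interchanges preprojective and preinjective indecomposables (the defect changes sign because $\langle\delta,\alpha\rangle=-\langle\alpha,\delta\rangle$, as $c\delta=\delta$), and swaps the two vanishing statements. So it suffices to show $\Hom_Q(X,Y)=0$ and $\Ext^1_Q(Y,X)=0$ whenever $Y$ is preprojective and $X$ is not.

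First I record that the defect is $\tau$-invariant. Since $KQ$ is hereditary, $\dv(\tau M)=c(\dv M)$ for every indecomposable non-projective $M$, where $c$ is the Coxeter transformation; as $c$ is an isometry of the Euler form fixing $\delta$, we get $\langle\delta,\dv\tau M\rangle=\langle c\delta,c\,\dv M\rangle=\langle\delta,\dv M\rangle$. Hence $\tau^{\pm1}$ preserves the trichotomy preprojective/regular/preinjective; in particular, if $X$ is non-preprojective then $\tau^kX$ is non-preprojective, and so nonzero and non-projective, for all $k\ge0$.

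The crux is the elementary remark that $\Hom_Q(X,P)=0$ for every indecomposable non-preprojective $X$ and every projective $P$: a nonzero map $X\to P$ has image a submodule of $P$, hence projective (as $KQ$ is hereditary), so the induced surjection of $X$ onto a nonzero projective splits and exhibits that projective as a summand of the indecomposable $X$, forcing $X$ to be projective and hence preprojective---a contradiction. To reach an arbitrary preprojective $Y$, write $Y=\tau^{-n}P$ with $P$ indecomposable projective, as in the standard description of the preprojective component of a tame hereditary algebra. Over a hereditary algebra the Auslander--Reiten equivalence $\underline{\module}\,KQ\simeq\overline{\module}\,KQ$ yields $\Hom_Q(M,N)\cong\Hom_Q(\tau M,\tau N)$ whenever $M$ has no projective and $N$ no injective summand (over $KQ$ a map out of such an $M$ cannot factor through a projective, and $\tau N$ has no injective summand, so the stable Homs agree with the genuine ones). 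Applying this $n$ times---each $\tau^kX$ has no projective summand and each $\tau^kY=\tau^{-(n-k)}P$, $0\le k\le n-1$, is preprojective hence non-injective---gives $\Hom_Q(X,Y)\cong\Hom_Q(\tau^nX,\tau^nY)=\Hom_Q(\tau^nX,P)$, which vanishes by the elementary remark. For the Ext-vanishing: if $Y$ is projective it is automatic, and otherwise Auslander--Reiten duality gives $\Ext^1_Q(Y,X)\cong D\Hom_Q(X,\tau Y)$ with $\tau Y$ again preprojective, so this is $0$ by the Hom-vanishing just established.

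The step I expect to be most delicate is conceptual rather than computational: ensuring the argument is not circular. The identification of the indecomposable preprojectives with the modules $\tau^{-n}P$ (and its compatibility with the defect-based definition used here), the Auslander--Reiten equivalence, and Auslander--Reiten duality are all part of the structure theory of tame hereditary algebras, and I would cite them as in \cite{CB} while checking that none of them already presupposes Lemma \ref{homext}. Granting these standard inputs, every remaining step is short---the defect computation is a one-line isometry argument, the base case is the splitting argument above, and the passage to general preprojectives is a finite iteration of one functorial isomorphism.
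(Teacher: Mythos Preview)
The paper does not supply its own proof of this lemma; it is quoted without argument from Crawley-Boevey's lecture notes \cite{CB}. Your proof is correct and is in fact the standard one: translate by $\tau$ until the preprojective $Y$ becomes projective, use that an indecomposable with a nonzero map to a projective must itself be projective (the hereditary splitting argument), deduce the $\Ext$-vanishing from the Auslander--Reiten formula $\Ext^1_Q(Y,X)\cong D\Hom_Q(X,\tau Y)$, and obtain part (2) from part (1) by $K$-duality.

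Two minor remarks. First, in the isomorphism $\Hom_Q(M,N)\cong\Hom_Q(\tau M,\tau N)$ the hypothesis you state, ``$N$ has no injective summand'', is not what is actually used; as your parenthetical correctly notes, what matters is that $\tau N$ has no injective summand, and this is automatic since $\tau$ carries indecomposable non-projectives to indecomposable non-injectives. The only genuine hypothesis needed is that $M$ have no projective summand, which you verify at each iterate $\tau^k X$. Second, your concern about circularity is well taken but does not materialize: in \cite{CB} the Auslander--Reiten formula, the $\tau$-invariance of the defect (via the Coxeter isometry), and the identification of the preprojective indecomposables with the modules $\tau^{-n}P$ are all established before, and independently of, the lemma in question; indeed the lemma is then used in the converse direction, to show that every negative-defect indecomposable lies in that component.
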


\subsubsection{Non-regular case} \label{non-regular-sec}
In what follows, by ``non-regular representations of the same type'', we understand a collection of non-regular representations which are either all preprojective or all preinjective. 

\begin{prop} \label{allthesame} Let $\mathcal{L} =(V_1, \ldots, V_r)$ be an orthogonal Schur sequence of non-regular representations of the same type. Then $\mathcal{L}$ can be arranged to form an orthogonal exceptional sequence.
\end{prop}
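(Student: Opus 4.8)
The plan is to mimic the strategy of Proposition \ref{dynkin}: build the auxiliary quiver $T$ on vertices $\{1,\ldots,r\}$ with an arrow $i \to j$ exactly when $\Ext^1_Q(V_i,V_j) \neq 0$, show that $T$ is acyclic, and then topologically order the vertices so that $i < j$ forces $\Ext^1_Q(V_i,V_j)=0$. Together with the orthogonality hypothesis $\Hom_Q(V_i,V_j)=0$ for $i \neq j$, this ordering will exhibit $\mathcal{L}$ as an orthogonal exceptional sequence (note each $V_i$ is automatically exceptional: it is Schur by hypothesis, and $\Ext^1_Q(V_i,V_i)=0$ will follow from the same positivity considerations, since a non-regular Schur representation of a Euclidean quiver is a real root).

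The key difference from the Dynkin case is that the Tits form $q$ of a Euclidean quiver is only positive \emph{semi}-definite, with radical spanned by $\delta$, so the contradiction-via-positive-definiteness argument does not apply verbatim. The workaround is to use the defect. Suppose for contradiction that $T$ has an oriented cycle $a_1 \cdots a_\ell$, giving $\Ext^1_Q(V_{ta_i}, V_{ta_{i+1}}) \neq 0$ (indices mod $\ell$), hence $\langle \alpha_{ta_i}, \alpha_{ta_{i+1}} \rangle \leq -1$. As in \eqref{eq1}, writing $\beta = \alpha_{ta_1} + \cdots + \alpha_{ta_\ell}$ and using mutual orthogonality ($\langle \alpha_{ta_i}, \alpha_{ta_j}\rangle \leq 0$ for $i \neq j$) together with $q(\alpha_{ta_i}) = 1$ (each summand is a real Schur root since it is an exceptional representation), I get
\[
q(\beta) \leq \sum_{i=1}^{\ell} \langle \alpha_{ta_i}, \alpha_{ta_{i+1}}\rangle + \ell \leq -\ell + \ell = 0.
\]
Since $q \geq 0$, this forces $q(\beta)=0$, so $\beta$ lies in the radical of $q$, i.e.\ $\beta = m\delta$ for some $m \geq 1$. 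The extra leverage I then intend to exploit is the defect: because all the $V_i$ are non-regular of the same type, their dimension vectors all have defect of the same strict sign, so $\langle \delta, \beta \rangle = \sum_i \langle \delta, \alpha_{ta_i}\rangle$ is a sum of $\ell$ nonzero terms all of the same sign, hence nonzero. But $\beta = m\delta$ gives $\langle \delta, \beta\rangle = m\langle \delta,\delta\rangle = 0$ (as $\delta$ is in the radical), a contradiction. This rules out the cycle and shows $T$ is acyclic.

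The step I expect to be the main obstacle — or at least the one requiring the most care — is pinning down exactly that each $\alpha_i$ is a real Schur root with $q(\alpha_i)=1$, and that $q(\beta)=0$ really does force $\beta \in \RR\delta$ (so $\beta = m\delta$ with $m$ a \emph{positive} integer, using that $\beta$ is a nonzero effective dimension vector). For the former, the point is that a Schur representation with $\Ext^1_Q(V_i,V_i)=0$ has $q(\alpha_i) = \dim\End_Q(V_i) - \dim\Ext^1_Q(V_i,V_i) = 1$; I should confirm $\Ext^1_Q(V_i,V_i)=0$ for a non-regular Schur representation, which is where Lemma \ref{homext} and the structure of Euclidean module categories enter. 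For the latter, the semi-definiteness of $q$ with one-dimensional radical $\langle \delta \rangle$ is standard for Euclidean quivers. Once these two facts are in place, the defect computation closes the argument cleanly, and the final passage from acyclicity of $T$ to the existence of the desired reordering is identical to the concluding paragraph of Proposition \ref{dynkin}.
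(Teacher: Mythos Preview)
Your proposal is correct and follows essentially the same route as the paper: build the auxiliary quiver $T$, derive $q(\beta)\leq 0$ exactly as in Proposition~\ref{dynkin}, use positive semi-definiteness to force $\beta=m\delta$, and then obtain a contradiction from the defect since all summands have nonzero defect of the same sign. The only difference is that you spell out why $q(\alpha_i)=1$ (equivalently, why each non-regular $V_i$ is exceptional), whereas the paper leaves this implicit by citing the Dynkin argument verbatim.
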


\begin{proof}
 Let $T$ be the quiver with $T_{0}=\{1, \ldots, r\}$ and an arrow from $i$ to $j$ if and only if $\Ext^{1}_{Q}(V_{i}, V_{j}) \neq 0$. It suffices to show that $T$ is acyclic. Assume for a contradiction that there exists an oriented cycle $a_{1} \ldots a_{\ell}$ in $T$. Then:
 \[\dsp \Ext_{Q}^{1}(V_{ta_{1}}, V_{ta_{2}}) \neq 0, \ldots, \Ext_{Q}^{1}(V_{ta_{\ell-1}}, V_{ta_{\ell}}) \neq 0, \Ext_{Q}^{1}(V_{ta_{\ell}}, V_{ta_{1}}) \neq 0.\]  Let $\alpha_i = \dv V_i$, $1 \leq i \leq r$. In precisely the same way as the proof of Proposition \ref{dynkin}, we obtain:
\[q(\alpha_{ta_{1}} + \ldots + \alpha_{ta_{\ell}}) \leq 0.\]
This forces $q(\alpha_{ta_{1}} + \ldots + \alpha_{ta_{\ell}}) = 0$ since $q$ is positive semi-definite; in particular, \[ \alpha_{ta_{1}} + \ldots + \alpha_{ta_{\ell}}=r \delta \;\;\;\; (\mbox{for some } r \in \mbb{Z}_{>0}).\]  Thus:
\[ 0 = \la \delta, \alpha_{ta_{1}} + \ldots + \alpha_{ta_{\ell}}\ra = \la \delta, \alpha_{ta_{1}}\ra + \ldots +\la \delta, \alpha_{ta_{\ell}}\ra. \]
But this contradicts our assumption that all the $V_{i}$ have nonzero defect of the same sign. 
\end{proof}

Next we consider orthogonal Schur sequences of both preprojective and preinjective representations.
 
\begin{corollary} \label{mixedcomp1} Let $\mathcal{L}=(V_1, \ldots, V_r)$ be an orthogonal Schur sequence of non-regular representations. Then $\mathcal{L}$ can be arranged to form an orthogonal exceptional sequence.
\end{corollary}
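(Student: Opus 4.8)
The plan is to split $\mathcal{L}$ according to defect and glue the two pieces together in the correct order. Since each $V_i$ is Schur it is indecomposable, and since it is non-regular it has nonzero defect; hence each $V_i$ is either preprojective or preinjective. Reindexing, I would write the members of $\mathcal{L}$ as a preprojective collection $P_1,\ldots,P_p$ together with a preinjective collection $I_1,\ldots,I_q$. Each of these two sub-collections is itself an orthogonal Schur sequence of non-regular representations of the same type, so Proposition \ref{allthesame} lets me arrange the $P$'s into an orthogonal exceptional sequence $(P_{\sigma(1)},\ldots,P_{\sigma(p)})$ and the $I$'s into an orthogonal exceptional sequence $(I_{\tau(1)},\ldots,I_{\tau(q)})$.

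I then claim that the concatenation
\[
\mathcal{L}' = (P_{\sigma(1)},\ldots,P_{\sigma(p)}, I_{\tau(1)},\ldots,I_{\tau(q)}),
\]
in which every preprojective precedes every preinjective, is the desired orthogonal exceptional sequence. First, each member is exceptional: a non-regular indecomposable representation of a Euclidean quiver has a real root as its dimension vector, so $q(\dv V_i)=1$; combined with $\langle \dv V_i, \dv V_i\rangle = \dim\Hom_Q(V_i,V_i)-\dim\Ext^1_Q(V_i,V_i) = 1-\dim\Ext^1_Q(V_i,V_i)$, this forces $\Ext^1_Q(V_i,V_i)=0$. Next, the orthogonality condition $\Hom_Q(W_i,W_j)=0$ for all $i\neq j$ holds throughout $\mathcal{L}'$: within each of the two blocks it is furnished by Proposition \ref{allthesame}, and across blocks it is simply inherited from the hypothesis that the original $\mathcal{L}$ is an orthogonal Schur sequence.

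It remains to verify $\Ext^1_Q(W_i,W_j)=0$ whenever $i<j$. Within the preprojective block and within the preinjective block this is exactly what Proposition \ref{allthesame} provides. The only new cases are the cross terms, where $W_i=P_{\sigma(a)}$ is preprojective and $W_j=I_{\tau(b)}$ is preinjective: here Lemma \ref{homext}(1), applied with $Y=P_{\sigma(a)}$ preprojective and $X=I_{\tau(b)}$ not preprojective, yields $\Ext^1_Q(P_{\sigma(a)},I_{\tau(b)})=0$. Assembling these facts shows that $\mathcal{L}'$ is an orthogonal exceptional sequence.

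The argument is largely bookkeeping once Proposition \ref{allthesame} and Lemma \ref{homext} are available, and I do not expect a serious obstacle. The one point that genuinely requires care --- and the only place where a careless ordering would break the conclusion --- is the asymmetry between $\Ext^1_Q(P,I)$ and $\Ext^1_Q(I,P)$: Lemma \ref{homext} controls only the former, while the latter may well be nonzero. This is exactly what dictates the preprojective-before-preinjective arrangement, so reversing the two blocks would in general violate the exceptional-sequence inequalities.
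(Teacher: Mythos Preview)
Your proof is correct and follows exactly the approach the paper takes: split into the preprojective and preinjective blocks, apply Proposition \ref{allthesame} to each, and then use Lemma \ref{homext} to justify placing the preprojectives before the preinjectives. The paper states this in a single sentence; your version simply spells out the verifications (exceptionality via real roots, the cross-term $\Ext^1$ vanishing, and the asymmetry that forces the ordering) in more detail.
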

 
\begin{proof} 
This follows from Proposition \ref{allthesame} and Lemma \ref{homext} by arranging the preprojectives to the left of the preinjectives. 
\end{proof}

\subsubsection{Mixed case} \label{mixed-sec}
Next we consider orthogonal Schur sequences that contain both non-regular and regular representations. It turns out that the assumption of mutual orthogonality greatly restricts these types of orthogonal Schur sequences.    
 
\begin{lemma} \label{regdelta} Let $X$ be a regular indecomposable representation with $\dv X = \delta$ and $Y$ a non-regular indecomposable representation. Then $X$ and $Y$ are not mutually orthogonal. 
\end{lemma}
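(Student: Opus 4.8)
The plan is to reduce everything to the homological interpretation of the Euler form, namely $\la \dv M, \dv N \ra = \dim \Hom_Q(M,N) - \dim \Ext_Q^1(M,N)$ for all $M,N \in \rep(Q)$ (valid since $KQ$ is hereditary), combined with the one structural fact about $\delta$ that matters here: since $\delta$ spans the radical of the symmetrized Euler form on a Euclidean quiver, one has $\la \delta, \alpha \ra = -\la \alpha, \delta \ra$ for every $\alpha \in \ZZ^{Q_0}$. I would state these two facts first, as they do all the work.

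Next I would use the hypothesis $\dv X = \delta$ to rewrite the two relevant Euler pairings in terms of the defect of $Y$. On the one hand $\la \dv X, \dv Y \ra = \la \delta, \dv Y \ra$, which is precisely the defect of $Y$; on the other hand, by the antisymmetry above, $\la \dv Y, \dv X \ra = \la \dv Y, \delta \ra = -\la \delta, \dv Y \ra$. Thus the two orderings of the Euler form applied to the pair $(X,Y)$ return the defect of $Y$ and its negative.

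Then I would argue by contradiction. Suppose $X$ and $Y$ are mutually orthogonal, i.e. $\Hom_Q(X,Y) = 0 = \Hom_Q(Y,X)$. Feeding this into the homological formula gives
\[
\la \delta, \dv Y \ra = \la \dv X, \dv Y \ra = -\dim \Ext_Q^1(X,Y) \leq 0, \qquad -\la \delta, \dv Y \ra = \la \dv Y, \dv X \ra = -\dim \Ext_Q^1(Y,X) \leq 0.
\]
The two inequalities together force $\la \delta, \dv Y \ra = 0$, i.e. the defect of $Y$ is zero. But $Y$ is a non-regular indecomposable, so by the definition of preprojective/preinjective recorded in this section its defect is nonzero, a contradiction. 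This finishes the proof.

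Honestly, I do not expect a serious obstacle here: the entire argument collapses once one notices that the antisymmetry $\la \delta, - \ra = -\la -, \delta \ra$ converts the two vanishing-Hom conditions into the mutually exclusive inequalities $\la \delta, \dv Y \ra \leq 0$ and $\la \delta, \dv Y \ra \geq 0$. The only points requiring care are bookkeeping ones: getting the sign conventions for the defect straight, and noting that the regularity and $\dv X = \delta$ hypotheses on $X$ are used only to place $\dv X$ in the radical of the form (so that $X$ being Schur is never needed), while the non-regularity of $Y$ is used solely to guarantee $\la \delta, \dv Y \ra \neq 0$.
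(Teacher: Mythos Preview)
Your proof is correct and uses essentially the same ingredients as the paper's: the homological interpretation of the Euler form and the antisymmetry $\la \delta, - \ra = -\la -, \delta \ra$. The paper's version differs only cosmetically, splitting into the preprojective and preinjective cases and directly exhibiting which of $\Hom_Q(Y,X)$ or $\Hom_Q(X,Y)$ is nonzero, rather than arguing by contradiction.
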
 

\begin{proof}
If $Y$ is preprojective then $Y$ has negative defect, thus $\la\delta, \dv Y \ra  = - \la \dv Y, \delta  \ra < 0$. This clearly implies that $\Hom_Q(Y, X) \neq 0$. The preinjective case is similar.
\end{proof}

\begin{lemma} \label{save} Let $X_{1}, \ldots, X_{\ell}$ be a collection of regular representations with $\alpha_{i} = \dv X_{i}$. Let $Y$ be a non-regular indecomposable representation. If $Y$ is mutually orthogonal to each $X_{i}$, then for any subcollection $X_{i_{1}}, \ldots, X_{i_{t}}$, the dimension vectors $\alpha_{i_1}, \ldots, \alpha_{i_t}$ satisfy:
\[\alpha_{i_1} + \ldots +  \alpha_{i_t} \neq r \delta, \;\;\;  \text{ for any } \; r \in \mbb{Z}.\] \end{lemma}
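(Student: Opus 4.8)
The plan is to proceed by contradiction, supposing that some subcollection $X_{i_1}, \ldots, X_{i_t}$ satisfies $\alpha_{i_1} + \ldots + \alpha_{i_t} = r\delta$ for some integer $r$. First I would observe that the left-hand side is a sum of dimension vectors, hence lies in $\ZZ_{\geq 0}^{Q_0}$, which forces $r \geq 0$; and since the $\alpha_{i_j}$ are nonzero, in fact $r \geq 1$. The key structural fact I want to exploit is the defect: since each $X_{i_j}$ is regular, $\langle \delta, \alpha_{i_j} \rangle = 0$ for every $j$, so $\langle \delta, \alpha_{i_1} + \ldots + \alpha_{i_t}\rangle = 0$, consistent with $r\delta$ having zero defect but giving no immediate contradiction on its own. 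The real leverage should come from pairing the equation $\alpha_{i_1} + \ldots + \alpha_{i_t} = r\delta$ against the non-regular representation $Y$ through the Euler form.

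The central step is to compute $\langle \ddim Y, r\delta \rangle$ (or $\langle r\delta, \ddim Y\rangle$) in two ways. On one hand, $\langle \ddim Y, r\delta\rangle = r \langle \ddim Y, \delta\rangle = -r \cdot (\text{defect of } Y)$, which is nonzero because $Y$ is non-regular and $r \geq 1$; this is exactly the quantity that drives Lemma \ref{regdelta}. On the other hand, expanding via the sum gives $\langle \ddim Y, r\delta\rangle = \sum_{j=1}^t \langle \ddim Y, \alpha_{i_j}\rangle$. Now I would use mutual orthogonality of $Y$ with each $X_{i_j}$: since $\Hom_Q(Y, X_{i_j}) = 0$ and $\Hom_Q(X_{i_j}, Y) = 0$, the Euler form reduces to $\langle \ddim Y, \alpha_{i_j}\rangle = -\dim \Ext^1_Q(Y, X_{i_j}) \leq 0$ and likewise $\langle \alpha_{i_j}, \ddim Y\rangle = -\dim \Ext^1_Q(X_{i_j}, Y) \leq 0$. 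The plan is to show these inequalities all point the same way and contradict the nonzero value forced by the defect.

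The main obstacle is pinning down the sign of the defect of $Y$ and reconciling it with the sign of the $\Ext$ terms, since $Y$ could be either preprojective or preinjective and I must handle both. I would split into cases. If $Y$ is preprojective, then its defect $\langle \delta, \ddim Y\rangle < 0$, so $\langle \ddim Y, \delta\rangle = -\langle \delta, \ddim Y\rangle > 0$ (using that $\delta$ is in the radical of the symmetrized Euler form, so $\langle \delta, \gamma\rangle = -\langle \gamma, \delta\rangle$ for all $\gamma$), giving $\langle \ddim Y, r\delta\rangle > 0$; but each summand $\langle \ddim Y, \alpha_{i_j}\rangle = -\dim\Ext^1_Q(Y, X_{i_j}) \leq 0$, so their sum is $\leq 0$, a contradiction. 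If $Y$ is preinjective the defect is positive, so I would instead pair on the other side and use $\langle \alpha_{i_j}, \ddim Y\rangle \leq 0$ together with $\langle r\delta, \ddim Y\rangle = r\langle \delta, \ddim Y\rangle > 0$ to reach the same contradiction.

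The technical points I would want to verify carefully are: that $\delta$ lies in the radical of the symmetrized Euler form of a Euclidean quiver, which is what lets me convert $\langle \delta, \ddim Y\rangle$ into $-\langle \ddim Y, \delta\rangle$; and that the vanishing of both $\Hom$-spaces for mutually orthogonal $Y$ and $X_{i_j}$ legitimately collapses the Euler form into a single negative $\Ext$ term. Both of these are standard for Euclidean quivers and follow from the definitions and Lemma \ref{homext}, so the argument should be short once the two-way computation is set up.
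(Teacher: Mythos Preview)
Your proposal is correct and follows essentially the same route as the paper: assume $\sum \alpha_{i_s}=r\delta$ with $r>0$, pair against $\ddim Y$ via the Euler form, and use mutual orthogonality to reduce each summand to $-\dim_K \Ext^1_Q(\cdot,\cdot)\leq 0$, contradicting the nonzero defect of $Y$. The paper writes out the preinjective case and leaves the preprojective case as ``similar,'' while you handle both cases explicitly and invoke the radical property $\langle \delta,\gamma\rangle=-\langle \gamma,\delta\rangle$; this is exactly the identity the paper uses in Lemma~\ref{regdelta}, so the arguments are the same.
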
 

\begin{proof} Assume for a contradiction that $\alpha_{i_1} + \ldots +  \alpha_{i_t} = r \delta$, with $r > 0$. If $Y$ is preinjective, then it has positive defect, so $0 < r\langle \delta , \dv Y \rangle $. Thus: 
\[ 0 <  \langle \alpha_{i_1} + \ldots +\alpha_{i_t}, \dv Y \rangle = \sum_{s=1}^{t} \langle \alpha_{i_s}, \dv Y \rangle=-\sum_{s=1}^{t} \dim_K \Ext^1_Q(X_{i_s},Y), \] 
which is impossible. The preprojective case is similar. 
\end{proof} 

\begin{lemma}\label{mixed1}
Let $\mathcal{L}=(V_1, \ldots, V_r)$ be an orthogonal Schur sequence consisting of both regular representations and non-regular representations of the same type. Then $\mathcal{L}$ can be arranged to form an orthogonal exceptional sequence. 
\end{lemma}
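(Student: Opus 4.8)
The plan is to mimic the proofs of Propositions \ref{dynkin} and \ref{allthesame} by analyzing the \emph{$\Ext$-quiver} $T$ (with vertices $\{1,\dots,r\}$ and an arrow $i\to j$ whenever $\Ext^1_Q(V_i,V_j)\neq 0$), but with two new inputs supplied by Lemmas \ref{homext} and \ref{save}. Throughout I fix a non-regular $Y\in\mathcal L$ (one exists, since $\mathcal L$ contains representations of both kinds); we may assume $Y$, and hence every non-regular member of $\mathcal L$, is preprojective, the preinjective case following by duality.

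First I would record that every $V_i$ is exceptional, so that an exceptional sequence is even possible. Each $V_i$ is Schur, hence a brick, so $\langle \dv V_i,\dv V_i\rangle = 1 - \dim_K \Ext^1_Q(V_i,V_i)\le 1$. For a non-regular $V_i$ the dimension vector is a real root with $q(\dv V_i)=1$, forcing $\Ext^1_Q(V_i,V_i)=0$. For a regular $V_i$, applying Lemma \ref{save} to the one-element subcollection $\{V_i\}$ and to $Y$ shows $\dv V_i\neq r\delta$ for every $r\in\ZZ$; since $\ZZ\delta$ is exactly the radical of the positive semi-definite form $q$, this gives $q(\dv V_i)\ge 1$, and combined with the inequality above forces $q(\dv V_i)=1$ and $\Ext^1_Q(V_i,V_i)=0$ again.

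Next I would show $T$ is acyclic. Suppose it has a minimal oriented cycle $a_1\cdots a_\ell$. Because every $q(\dv V_{ta_i})=1$, because the off-diagonal Euler pairings are $\le 0$ by mutual orthogonality, and because the consecutive ones are $\le -1$ owing to the nonzero $\Ext^1$'s, the computation from Proposition \ref{allthesame} gives $q(\dv V_{ta_1}+\cdots+\dv V_{ta_\ell})\le 0$, whence it equals $0$ and $\dv V_{ta_1}+\cdots+\dv V_{ta_\ell}=r\delta$ for some $r>0$. The key new observation is that, by Lemma \ref{homext}(1), there is no arrow in $T$ from a preprojective vertex to a regular one (as $\Ext^1_Q(\text{preproj},\text{reg})=0$); therefore a cycle can never pass from the preprojective block of $T$ into the regular block, and so must lie entirely in one block. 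If the cycle is entirely preprojective, pairing $r\delta$ with $\delta$ and using that all defects $\langle\delta,\dv V_{ta_i}\rangle$ are negative yields a contradiction exactly as in Proposition \ref{allthesame}. If the cycle is entirely regular, then $\dv V_{ta_1}+\cdots+\dv V_{ta_\ell}=r\delta$ exhibits a subcollection of regular dimension vectors summing to a multiple of $\delta$, contradicting Lemma \ref{save} (each $V_{ta_i}$ being mutually orthogonal to $Y$). Hence $T$ is acyclic.

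Finally, ordering the vertices of $T$ so that $ta>ha$ for every arrow makes $\Ext^1_Q(V_i,V_j)=0$ whenever $i<j$; together with mutual orthogonality ($\Hom_Q(V_i,V_j)=0$ for all $i\ne j$) and the exceptionality established above, the reordered $\mathcal L$ is an orthogonal exceptional sequence. I expect the main obstacle to be the block-confinement step for cycles: recognizing that Lemma \ref{homext} forbids preprojective-to-regular arrows in $T$ is precisely what lets the two separate contradictions (defect for the non-regular block, Lemma \ref{save} for the regular block) finish the argument, and it is the feature that sets this mixed case apart from the homogeneous cases of Propositions \ref{dynkin} and \ref{allthesame}.
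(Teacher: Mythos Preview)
Your proof is correct, and the overall architecture (build the $\Ext$-quiver $T$, show it is acyclic by deriving $\sum_i \alpha_{ta_i}=r\delta$ from a hypothetical cycle and reaching a contradiction) matches the paper. The place where you diverge is the endgame after obtaining $\sum_i \alpha_{ta_i}=r\delta$. You invoke Lemma~\ref{homext} to forbid arrows in $T$ from the non-regular block to the regular block, conclude that any cycle is confined to a single block, and then finish the two blocks separately (defect argument for the non-regular block, Lemma~\ref{save} for the regular block). The paper does \emph{not} use Lemma~\ref{homext} here at all: it simply observes that if every $V_{ta_i}$ on the cycle were regular, Lemma~\ref{save} would already be violated; hence at least one $V_{ta_i}$ is non-regular, and then $0=\langle\delta,\sum_i\alpha_{ta_i}\rangle=\sum_i\langle\delta,\alpha_{ta_i}\rangle$ is impossible because the nonzero summands all have the same sign. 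So your block-confinement step is an unnecessary detour: the defect identity handles mixed cycles in one stroke. On the minor point of exceptionality, you obtain $\dv V_i\neq r\delta$ for regular $V_i$ from Lemma~\ref{save} applied to the singleton $\{V_i\}$, whereas the paper uses Lemma~\ref{regdelta}; both work, and yours is marginally more uniform with the rest of your argument.
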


\begin{proof}
 Let $T$ be the quiver with $T_{0}=\{1, \ldots, r\}$ and an arrow from $i$ to $j$ if and only if $\Ext_{Q}^{1}(V_{i}, V_{j}) \neq 0$. It suffices to show that $T$ is acyclic. Suppose there exists an oriented cycle $a_{1} \ldots a_{\ell}$ in $T$. Then: \[\dsp \Ext_{Q}^{1}(V_{ta_{1}}, V_{ta_{2}}) \neq 0, \ldots, \Ext_{Q}^{1}(V_{ta_{\ell-1}}, V_{ta_{\ell}}) \neq 0,  \Ext_{Q}^{1}(V_{ta_{\ell}}, V_{ta_{1}}) \neq 0.\]

Let $\alpha_{i} = \dv V_{i}$, $1 \leq i \leq r$. Lemma \ref{regdelta} implies that the $\alpha_i$ are Schur roots with $\alpha_i \neq \delta, \forall 1 \leq i \leq r$. Therefore, they have to be real Schur roots; in particular, for each $i$,  $q(\alpha_{i})=1$. Using the same analysis as in Proposition \ref{allthesame}, we obtain $q(\alpha_{ta_{1}} + \ldots + \alpha_{ta_{\ell}}) = 0$. Thus $\alpha_{ta_{1}} + \ldots + \alpha_{ta_{\ell}}=r \delta$,
and it follows that
\[ 0 = \la \delta, \alpha_{ta_{1}} + \ldots + \alpha_{ta_{\ell}}\ra = \la \delta, \alpha_{ta_{1}}\ra + \ldots + \la \delta, \alpha_{ta_{\ell}}\ra \]
By Lemma \ref{save}, at least one $V_{ta_{i}}$ must be non-regular, so at least one $\la \delta, \alpha_{ta_{i}} \ra$ is nonzero, and the non-zero $\la \delta, \alpha_{ta_{i}} \ra$ are all of the same sign. This leads to a contradiction.
\end{proof}

\begin{prop} \label{mixedcomp} Let $\mathcal{L}=(V_1, \ldots, V_t)$  be an orthogonal Schur sequence of representations consisting of both regular and non-regular representations. Then $\mathcal{L}$ can be arranged to form an orthogonal exceptional sequence.
\end{prop}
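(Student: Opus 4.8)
The plan is to reuse the quiver‑acyclicity strategy of Propositions \ref{dynkin} and \ref{allthesame} and of Lemma \ref{mixed1}. Let $T$ be the quiver with $T_0=\{1,\ldots,t\}$ and an arrow $i\to j$ precisely when $\Ext^1_Q(V_i,V_j)\neq 0$. Since the $V_i$ are pairwise orthogonal we already have $\Hom_Q(V_i,V_j)=0$ for $i\neq j$, and each Schur $V_i$ will be exceptional once we know $\alpha_i:=\dv V_i$ is a real Schur root. Granting acyclicity of $T$, we may order the vertices so that $ta>ha$ for every $a\in T_1$; then $\Ext^1_Q(V_i,V_j)=0$ whenever $i<j$, and the reordered sequence is an orthogonal exceptional sequence. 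So everything reduces to showing $T$ is acyclic. First I would record the basic reduction: because $\mathcal L$ contains a non‑regular member $Y$ orthogonal to every regular member, Lemma \ref{regdelta} forbids any regular $V_i$ from having dimension vector $\delta$; hence every $\alpha_i$ is a real Schur root with $q(\alpha_i)=1$. This makes the quadratic‑form estimate of Proposition \ref{allthesame} run: for any oriented cycle $a_1\cdots a_\ell$ in $T$ one obtains $q(\alpha_{ta_1}+\cdots+\alpha_{ta_\ell})\leq 0$, hence $\alpha_{ta_1}+\cdots+\alpha_{ta_\ell}=r\delta$ for some $r\in\ZZ_{>0}$, so that $\sum_i\la\delta,\alpha_{ta_i}\ra=0$.

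The key new observation is that every cycle in $T$ is \emph{monochromatic} in type. Suppose the cycle contains a preprojective vertex $V_{ta_i}$. Its successor satisfies $\Ext^1_Q(V_{ta_i},V_{ta_{i+1}})\neq 0$; but Lemma \ref{homext}(1), applied with $Y=V_{ta_i}$ preprojective, gives $\Ext^1_Q(V_{ta_i},X)=0$ for every non‑preprojective $X$, so $V_{ta_{i+1}}$ is again preprojective. Running around the cycle, all its vertices are preprojective. Dually, using Lemma \ref{homext}(2) on the \emph{predecessor} relation, a cycle containing a preinjective vertex is entirely preinjective. In particular a cycle cannot mix preprojectives with preinjectives, so every cycle in $T$ is either all preprojective, all preinjective, or all regular.

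It then remains to eliminate each case. If the cycle is all preprojective (respectively all preinjective), then every $\la\delta,\alpha_{ta_i}\ra$ is negative (respectively positive), contradicting $\sum_i\la\delta,\alpha_{ta_i}\ra=0$; this is exactly the defect argument of Proposition \ref{allthesame}. If instead the cycle consists entirely of regular representations, then these regulars are all orthogonal to the non‑regular $Y\in\mathcal L$, so Lemma \ref{save} yields $\alpha_{ta_1}+\cdots+\alpha_{ta_\ell}\neq r\delta$, contradicting the conclusion of the first paragraph. Thus $T$ has no oriented cycle, $T$ is acyclic, and the topological ordering produces the desired orthogonal exceptional sequence.

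I expect the all‑regular cycle to be the main obstacle. In the non‑regular cases the defect $\la\delta,-\ra$ immediately separates signs and kills the cycle, but for a purely regular cycle every defect vanishes, so the quadratic form alone cannot tell $r\delta$ apart from a forbidden configuration. The genuine content there is that the ambient sequence is \emph{mixed}: the mere presence of a non‑regular $Y$ orthogonal to all the regulars is what activates Lemma \ref{save}. By contrast, the monochromatic‑cycle step, though it looks central, is really a bookkeeping application of Lemma \ref{homext}; the essential new input is the interaction between regularity and orthogonality encoded in Lemma \ref{save}.
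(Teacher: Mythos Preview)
Your proof is correct. The paper takes a slightly different, more modular route: it splits $\mathcal L$ into the preprojective part and the regular-plus-preinjective part, applies Proposition \ref{allthesame} to the first and Lemma \ref{mixed1} to the second, and then concatenates (preprojectives first) using Lemma \ref{homext} to kill the cross-$\Ext$'s. You instead argue acyclicity of the full Ext-quiver $T$ in one pass, using Lemma \ref{homext} to show that every oriented cycle is monochromatic in type and then disposing of each colour with the defect argument (preprojective or preinjective) or with Lemma \ref{save} (all regular). The ingredients are identical; your organisation has the mild advantage of absorbing the proof of Lemma \ref{mixed1} rather than invoking it as a separate lemma, while the paper's version makes the block structure (preprojectives, then regulars-and-preinjectives) of the resulting exceptional sequence explicit.
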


\begin{proof}
We may assume $\mathcal{L}$ contains both preprojective and preinjective representations. By Proposition \ref{allthesame}, the preprojectives can be arranged to form an orthogonal exceptional sequence. By Lemma \ref{mixed1}, the regulars and preinjectives can be arranged to form an orthogonal exceptional sequence. By Lemma \ref{homext}, the combination of these two sequences (with the preprojectives first) forms an orthogonal exceptional sequence. 
\end{proof} 

We are now ready to consider orthogonal Schur sequences of regular representations.

\subsubsection{Regular case} \label{regular-sec}
In this section we restrict our attention to $\req =\rep{(Q)}_{\langle \delta, \cdot \rangle}^{ss}$, the abelian subcategory of regular representations of $\rep{Q}$ that is closed under extensions. Here, $\langle \delta, \cdot \rangle$ denotes the weight $\theta \in \ZZ^{Q_0}$ defined as $\theta(i)=\langle \delta, \mathbf{e}_i \rangle$ for all $i \in Q_0$.

In $\req$, there exist Schur representations that are not exceptional, and orthogonal Schur sequences that cannot be arranged to form orthogonal exceptional sequences, so different techniques will be required. 

We begin by recalling the Auslander-Reiten translate $\tau$ for hereditary algebras. For a more general definition, see \cite{AS-SI-SK}.
\begin{recall} Let $A=KQ$. The \emph{Auslander Reiten translations} are defined by $\tau(-):= D \Ext^{1}_{A}(-, A)$ and $\tau^{-}(-)=\Ext^{1}_{A}(D(-), A)$.    \end{recall}

\begin{definition} A regular representation is called \emph{regular simple} if it has no proper regular subrepresentations. \end{definition}

\begin{remark}
\begin{enumerate}
\item The regular simple representations are precisely the $\la \delta, \cdot \ra$-stable representations.
\item By ``regular non-simple," we mean regular representations which are not regular simple.
\end{enumerate}
\end{remark}

The image, kernel, and cokernel  of a morphism between two regular representations are regular. Also, if $X$ is an indecomposable regular representation, then $\tau^{i}(X) \neq 0$ for any $i \in \mbb{Z}$ and in fact, $X$ is $\tau$-periodic. 

\begin{lemma}\cite{CB} \label{reg-simple-properties}
Let $X$ be a regular simple representation. Then: 
\be[i)]
\item $X$ is Schur;
\item $\tau^{i}(X)$ is regular simple for all $i$;
\item $\tau(X) \cong X$ if and only if $\dv X$ is an imaginary root; 
\item if $X$ has period $p$, then $\dv X + \dv \tau(X) + \ldots + \dv \tau^{p-1}(X) = \delta$. 
\end{enumerate}
\end{lemma}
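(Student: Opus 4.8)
The statement to prove is Lemma \ref{reg-simple-properties}, a list of four properties of regular simple representations over a Euclidean quiver. Let me sketch a proof plan.

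\medskip

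\textbf{The plan.} I would establish all four properties using the structure theory of the regular category $\R(Q)$, which is an abelian, extension-closed subcategory of $\rep(Q)$ in which $\tau$ acts as an autoequivalence. The key conceptual fact is that $\R(Q)$ decomposes as a direct sum of uniserial (serial) tube categories, each with finitely many regular simples permuted cyclically by $\tau$.

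\medskip

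For part (i), I would argue that $X$ being regular simple means it has no proper nonzero regular subrepresentation, hence it is a simple object in $\R(Q)$; since $\R(Q)$ is abelian, any nonzero endomorphism of a simple object is an automorphism, so $\End_Q(X)$ is a division algebra, and over the algebraically closed field $K$ this forces $\End_Q(X) \simeq K$, i.e. $X$ is Schur. For part (ii), since $\tau$ is an exact autoequivalence of $\R(Q)$ (the regular representations being $\tau$-periodic and stable under $\tau^{\pm 1}$), it sends simple objects to simple objects; thus $\tau^i(X)$ is again regular simple for every $i$.

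\medskip

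For part (iii), I would compute the defect: $\langle \delta, \dv \tau(X)\rangle = \langle \delta, \dv X \rangle = 0$ since all regular representations have defect zero, so defect alone does not distinguish them. Instead I would use the period structure. If $\tau(X) \cong X$ then $X$ alone forms a $\tau$-orbit (period $1$), and the homogeneous tubes are exactly those whose regular simples have dimension vector equal to an imaginary root (in fact $\delta$ itself for the standard Euclidean tubes), so $\dv X$ is imaginary; conversely, if $\dv X$ is an imaginary root, then $q(\dv X) \le 0$, which rules out $X$ lying in a non-homogeneous tube (where every regular simple is a real root with $q = 1$), forcing period $1$ and hence $\tau(X)\cong X$. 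The main obstacle is making precise the link between the $\tau$-period and the type of root, which rests on the known classification of tubes for Euclidean quivers (quoted from \cite{CB}). For part (iv), I would use that the regular simples $X, \tau(X), \ldots, \tau^{p-1}(X)$ in a tube of rank $p$ are precisely the composition factors (in $\R(Q)$) of the regular simple of the adjacent larger tube layer, or more directly that summing over a full $\tau$-orbit yields the defect-zero minimal imaginary root $\delta$; this is the standard tube-sum identity. I would verify the sum $\sum_{i=0}^{p-1}\dv \tau^i(X)$ is $\tau$-invariant, has zero defect, and is a positive root, whence it equals $\delta$ by minimality and uniqueness of the imaginary Schur root.

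\medskip

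The hardest part will be part (iii), specifically the rigorous identification of $\tau$-fixed regular simples with imaginary roots, since this genuinely requires invoking the tube classification rather than a short formal argument; the remaining parts follow more mechanically from the abelian and autoequivalence structure of $\R(Q)$.
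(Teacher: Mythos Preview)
The paper does not prove this lemma at all: it is stated as a citation from Crawley-Boevey's lecture notes \cite{CB} and used as a black box. So there is no ``paper's own proof'' to compare against.

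Your sketch is broadly the standard argument and is essentially correct, with two remarks. For part (iii), you can avoid the circularity you flag by using Auslander--Reiten duality directly: since $X$ and $\tau X$ are both regular simple (by (ii)), either $\tau X\cong X$, in which case $\Ext^1_Q(X,X)\cong D\Hom_Q(X,\tau X)\cong D\End_Q(X)\cong K$ and hence $q(\dv X)=0$; or $\tau X\not\cong X$, in which case $\Hom_Q(X,\tau X)=0$ (no nonzero maps between non-isomorphic simples in $\R(Q)$), so $\Ext^1_Q(X,X)=0$ and $q(\dv X)=1$. This gives (iii) without invoking the tube classification. For part (iv), the step ``the sum is a positive root'' is not quite the right phrasing and is the only genuine soft spot: what you actually need is that $\alpha:=\sum_{i=0}^{p-1}\dv\tau^i(X)$ is fixed by the Coxeter transformation (since $\dv\tau(-)=c(\dv-)$ on regulars), hence lies in the radical $\ZZ\delta$ of $q$, so $\alpha=m\delta$ with $m\geq 1$; then $m=1$ follows either from the existence of an extending vertex where $\delta$ takes value $1$, or from the fact that the unique indecomposable with these $p$ composition factors is Schur and hence has dimension vector $\leq\delta$ (Lemma \ref{dimcap}). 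With those adjustments your plan goes through.
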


\begin{definition} A regular representation $X$ is called \emph{regular uniserial} if all of the regular subrepresentations of $X$ lie in a chain: 
\[ 0 = X_{0} \subsetneq X_{1} \subsetneq \ldots \subsetneq X_{r-1} \subsetneq X_{r} = X\]  In this case, $X$ has regular simple composition factors $X_{1}, X_{2}/X_{1}, \ldots, X_{r}/X_{r-1}$,  \emph{regular length} $r\ell(X):=r$, \emph{regular socle}  $\rSoc(X):=X_{1}$ and \emph{regular top} $\rTop(X):=X/X_{r-1}$. 
\end{definition} 

\begin{remark}
It follows from the proof of \cite[Theorem 2.12]{serg} that any regular subrepresentation of a regular indecomposable representation is also indecomposable. 
\end{remark}

\begin{theorem} \cite{CB} \label{runique}
Every indecomposable regular representation $X$ is regular uniserial. Moreover, if $E$ is the regular top of $X$, then the compositions factors of $X$ are precisely $E, \tau(E), \ldots, \tau^{\ell}(E)$ where $\ell+1=r\ell(X)$. Thus a regular indecomposable is uniquely determined by its regular top and regular length.
\end{theorem}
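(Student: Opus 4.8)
The plan is to argue by induction on the regular length $r\ell(X)$, reducing the uniserial structure to the statement that an indecomposable regular representation is a \emph{local} object of $\req$, i.e. has a simple regular top. Two facts make the induction go through, and both are essentially available: the Remark preceding the theorem says that every regular subrepresentation of a regular indecomposable is again indecomposable, and, dually, so is every regular quotient. This dual statement follows by applying the standard $K$-duality $D=\Hom_K(-,K)$, which restricts to a duality between $\req$ and $\mathcal R(Q^{\mathrm{op}})$ interchanging subobjects and quotients and intertwining $\tau$ with $\tau^-$.

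First I would record the socle and top structure. The regular socle $\rSoc(X)$ is semisimple in $\req$ and is a regular subrepresentation of $X$, hence indecomposable by the Remark; an indecomposable semisimple object is simple, so $\rSoc(X)$ is regular simple. Symmetrically, the regular top $\rTop(X)=X/\rad_{\req}(X)$ is semisimple and, being a regular quotient of $X$, indecomposable by the dual Remark, so $E:=\rTop(X)$ is regular simple. In particular $\rad_{\req}(X)$ is the unique maximal regular subrepresentation of $X$, and since $\req$ is a finite length category, every proper regular subrepresentation is contained in it.

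Next I would prove uniseriality together with the description of the composition factors. For $r\ell(X)=1$ there is nothing to do. For $r\ell(X)=n\ge 2$, set $M:=\rad_{\req}(X)$, so that $0\to M\to X\to E\to 0$ with $E$ regular simple and $r\ell(M)=n-1$. By the Remark $M$ is regular indecomposable, so by induction $M$ is regular uniserial and its regular subrepresentations form a chain. Because every proper regular subrepresentation of $X$ lies in $M$, the regular subrepresentations of $X$ are exactly those of $M$ together with $X$ itself, which is again a chain; hence $X$ is regular uniserial. To pin down the factors, let $F:=\rTop(M)$ and form $Y:=X/\rad_{\req}(M)$, a regular quotient of $X$ of regular length $2$ sitting in $0\to F\to Y\to E\to 0$. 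Being a regular quotient of the indecomposable $X$, the object $Y$ is indecomposable, so this sequence is non-split and $\Ext^1_Q(E,F)\ne 0$. The crux is now the Auslander--Reiten formula $\Ext^1_Q(E,F)\cong D\Hom_Q(F,\tau E)$: since $F$ and $\tau E$ are regular simple (Lemma \ref{reg-simple-properties}), and a nonzero morphism between regular simples is an isomorphism, this forces $F\cong\tau E$. Feeding this into the inductive description of the factors of $M$ (from the top, $F,\tau F,\dots,\tau^{\,n-2}F$) shows that the composition factors of $X$, read from the top, are exactly $E,\tau E,\dots,\tau^{\,\ell}E$ with $\ell+1=r\ell(X)$.

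Finally, for uniqueness I would note that a regular uniserial representation with top $E$ and length $n$ is indecomposable and is a non-split extension of $E$ by its radical, a regular uniserial representation with top $\tau E$ and length $n-1$; by induction the latter is determined up to isomorphism, say $M$, and one computes $\Ext^1_Q(E,M)\cong D\Hom_Q(M,\tau E)\cong K$, because any map from $M$ onto the regular simple $\tau E$ factors through $\rTop(M)=\tau E$ and is thus unique up to scalar. Since $\Ext^1_Q(E,M)$ is one-dimensional, all non-split extensions have isomorphic middle term, so $X$ is determined up to isomorphism by $E$ and $n$. The main obstacle is the identification $F\cong\tau E$, where the Auslander--Reiten formula and the rigidity of $\Hom$ and $\Ext^1$ between regular simples enter; everything else is a bookkeeping induction resting on the Remark and its dual.
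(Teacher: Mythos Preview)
The paper does not supply its own proof of this statement; it is quoted from Crawley--Boevey's lecture notes \cite{CB} and used as a black box. So there is nothing to compare against directly.

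Your argument is essentially the standard one from the cited source and is correct in outline: use the Remark (regular subrepresentations of a regular indecomposable are indecomposable) and its $K$-dual to see that $\rSoc(X)$ and $\rTop(X)$ are regular simple, then induct on regular length, invoking the Auslander--Reiten formula $\Ext^1_Q(E,F)\cong D\Hom_Q(F,\tau E)$ to force $\rTop(\rad_{\req}X)\cong\tau E$, and finish uniqueness via $\dim_K\Ext^1_Q(E,M)=1$. One caution worth recording: the Remark you lean on is itself justified in the paper only by pointing to \cite[Theorem~2.12]{serg}, and in many treatments the indecomposability of regular subobjects and quotients is \emph{deduced from} uniseriality rather than used to prove it. You should check that the argument in \cite{serg} establishes this fact independently, or else replace that step by a direct argument (e.g.\ show first that a regular indecomposable has simple regular socle via the almost split sequence ending in $X$, which is how \cite{CB} proceeds). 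Apart from this potential circularity in the order of dependencies, the proof is sound.
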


In light of Theorem \ref{runique}, for a regular indecomposable representation $V$, we write $V= \left (E, \tau(E), \ldots, \tau^{\ell}(E) \right )$ where $E$ is the regular top of $V$ and $\ell+1=r\ell(V)$. We will write these factors horizontally or vertically, depending on whatever is most convenient. 

\begin{remark} \label{AR}
Let $E$ be a non-homogeneous regular simple with period $p$ and let $ r \leq p -1 $. For $ 0 < j < r$, we always have:
\[ \left( \begin{tabular}{c} $\tau^{j}(E)$ \\ $\tau^{j+1}(E)$ \\$\vdots$ \\ $\tau^{r}(E)$  \end{tabular} \right) \hookrightarrow \left( \begin{tabular}{c} $E$ \\ $\tau(E)$ \\  $\vdots$ \\ $\tau^{r-1}(E) $\\$\tau^{r}(E)$  \end{tabular} \right) \;\;\; \mbox{ and }\;\;\;
\left (\begin{tabular}{c} $E$ \\ $\tau(E)$ \\$\vdots$ \\ $\tau^{r-1}(E)$\\$\tau^{r}(E)$  \end{tabular} \right ) \twoheadrightarrow \left (\begin{tabular}{c} $E$ \\ $\tau(E)$ \\  $\vdots$ \\$\tau^{j}(E)$  \end{tabular} \right ). \]
\end{remark}

\begin{lemma} \cite{CB} \label{dimcap} Let be  $X$ a regular indecomposable representation lying in a tube of period $p$. Then the following are equivalent: 
\begin{enumerate}[a)]
\item $X$ is Schur.
\item $\dv X \leq \delta$.
\item $r \ell(X) \leq p$. 
\end{enumerate}
\end{lemma}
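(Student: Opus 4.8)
The plan is to reduce everything to the regular uniserial description of $X$. By Theorem \ref{runique} write $X=(E,\tau(E),\dots,\tau^{r-1}(E))$, where $E=\rTop(X)$ is a regular simple in a tube of period $p$ and $r:=r\ell(X)$, so that the regular composition factors of $X$ are exactly $E,\tau(E),\dots,\tau^{r-1}(E)$ and $\dv X=\sum_{j=0}^{r-1}\dv\tau^{j}(E)$. Since $E$ has period $p$ we have $\tau^{p}(E)\cong E$, hence $\dv\tau^{j+p}(E)=\dv\tau^{j}(E)$ for all $j$, and by Lemma \ref{reg-simple-properties}(iv) any $p$ consecutive factors sum to $\sum_{i=0}^{p-1}\dv\tau^{j+i}(E)=\delta$. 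I would first establish the equivalence (b)$\Leftrightarrow$(c), which is purely numerical, and then (a)$\Leftrightarrow$(c) via an analysis of $\End_Q(X)$.

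For (c)$\Rightarrow$(b): if $r\le p$ then the index set $\{0,\dots,r-1\}$ lies in a single period, so $\dv X=\sum_{j=0}^{r-1}\dv\tau^{j}(E)\le\sum_{j=0}^{p-1}\dv\tau^{j}(E)=\delta$, the inequality holding because the omitted summands are nonnegative. For (b)$\Rightarrow$(c) I argue contrapositively: if $r>p$, write $r=qp+s$ with $q\ge1$ and $0\le s<p$, and group the factors into $q$ full periods plus $s$ leftover ones to get $\dv X=q\,\delta+\sum_{j=0}^{s-1}\dv\tau^{j}(E)$. Since $\delta$ is a nonzero nonnegative vector and each regular simple has nonzero dimension vector, this forces $\dv X\not\le\delta$ (either $q\ge2$, giving $\dv X\ge 2\delta$, or $q=1$ and $s\ge1$, giving $\dv X=\delta+(\text{nonzero nonnegative})$). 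This part is routine bookkeeping with the periodicity of $\tau$.

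The substantive step is (a)$\Leftrightarrow$(c). For (c)$\Rightarrow$(a) I claim that when $r\le p$ every nonzero $f\in\End_Q(X)$ is an isomorphism, whence $\End_Q(X)$ is a finite-dimensional division algebra over the algebraically closed $K$, so $\End_Q(X)\cong K$ and $X$ is Schur. Indeed, since images and kernels of morphisms of regular representations are regular, $\im f$ is a nonzero regular subrepresentation of $X$, hence by uniseriality equals the unique length-$s$ term $X_s$ of the chain, while $\im f\cong X/\ker f$ with $\ker f=X_{r-s}$. Computing the top of $\im f$ two ways via Theorem \ref{runique} ($X_s$ has top $\tau^{r-s}(E)$ and $X/X_{r-s}$ has top $E$) gives $E\cong\tau^{r-s}(E)$, so $r-s\equiv0\pmod p$; as $0\le r-s\le r-1<p$, this forces $s=r$, i.e.\ $f$ is surjective and therefore bijective. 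Conversely, for the contrapositive of (a)$\Rightarrow$(c), assume $r>p$ and exhibit a nonzero non-invertible endomorphism: the quotient $X\twoheadrightarrow X/X_{p}$ and the inclusion $X_{r-p}\hookrightarrow X$ (cf.\ Remark \ref{AR}) have the same regular top $E$ and the same regular length $r-p\ge1$, so $X/X_{p}\cong X_{r-p}$ by Theorem \ref{runique}; composing $X\twoheadrightarrow X/X_{p}\xrightarrow{\sim}X_{r-p}\hookrightarrow X$ yields $0\neq f\in\End_Q(X)$ with image $X_{r-p}\subsetneq X$, so $f$ is not a scalar and $\End_Q(X)\ne K$.

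The main obstacle is the (c)$\Rightarrow$(a) direction: correctly tracking the regular tops and socles of the subquotients $X_s$ and $X/X_{r-s}$ and invoking that a regular uniserial representation is determined by its top and length (Theorem \ref{runique}), so that the congruence $r-s\equiv0\pmod p$ can be extracted and pinned down to $s=r$ under the hypothesis $r\le p$. Everything else follows directly from the $\tau$-periodicity in the tube and Lemma \ref{reg-simple-properties}(iv).
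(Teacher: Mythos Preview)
Your argument is correct. Note, however, that the paper does not supply its own proof of this lemma: it is quoted verbatim from Crawley-Boevey's lecture notes \cite{CB} and used as a black box, so there is no in-paper proof to compare against. Your write-up is essentially the standard argument one finds in those notes---the equivalence (b)$\Leftrightarrow$(c) via the periodic decomposition $\dv X = q\delta + (\text{remainder})$, and (a)$\Leftrightarrow$(c) by reading off the regular top of $\im f$ in two ways to force $r-s\equiv 0\pmod p$---so there is no methodological divergence to discuss.

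One small cosmetic point: in the step (c)$\Rightarrow$(a) you write ``$0\le r-s\le r-1<p$''; the inequality $r-1<p$ uses $r\le p$, which is exactly the hypothesis, so this is fine, but it may read more cleanly to say $0\le r-s\le r\le p$ and then observe $r-s<r\le p$ since $s\ge 1$. Either way the conclusion $r-s=0$ follows.
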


\begin{definition}
Let $E$ be a regular simple with $\tau$-orbit $\mathcal{E}=\{E, \tau(E), \ldots, \tau^{p-1}(E)\}$. The \emph{tube $\mathcal{T}$ generated by $\mathcal{E}$} with period $p$ is the set of all indecomposable regular representations whose regular composition factors lie in $\mathcal{E}$. 
\end{definition}

If a tube $\mathcal{T}$ has period $p$, so does every indecomposable representation in $\mathcal{T}$.  Tubes generated by a $\delta$-dimensional regular simple are called \emph{homogeneous}. The number of non-homogeneous tubes of a Euclidean quiver is at most three.  

\begin{lemma}\label{soctop} Let $\mathcal{L}=( V_{1}, \ldots, V_{r})$ be an orthogonal Schur sequence of regular representations. Then for each $i \neq j$, the following hold:
\be[a)]
\item $\rSoc(V_{i}) \ncong \rSoc(V_{j})$;
\item  $\rSoc(V_{i}) \ncong \rTop(V_{j})$;
\item $\rTop(V_{i}) \ncong \rTop(V_{j})$.
\end{enumerate}
\end{lemma}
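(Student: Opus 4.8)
The plan is to use the fact that each $V_i$, being Schur, is indecomposable, hence (being regular) is regular uniserial by Theorem \ref{runique} and so carries a well-defined regular socle $\rSoc(V_i)$ and regular top $\rTop(V_i)$. I would write $V_i = (E_i, \tau(E_i), \ldots, \tau^{\ell_i}(E_i))$, so that $\rTop(V_i) = E_i$ and $\rSoc(V_i) = \tau^{\ell_i}(E_i)$. By Theorem \ref{runique} a regular indecomposable is determined up to isomorphism by its regular top together with its regular length, and, using $E_i = \tau^{-\ell_i}(\rSoc(V_i))$, equally well by its regular socle together with its regular length. I would also record that the members of $\mathcal{L}$ are pairwise non-isomorphic, since $V_i \cong V_j$ with $i \neq j$ would force $\Hom_Q(V_i,V_j) \cong \End_Q(V_i) \neq 0$, contradicting orthogonality. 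For every part the strategy is the same: assume the forbidden isomorphism and manufacture a nonzero homomorphism $V_i \to V_j$ or $V_j \to V_i$, contradicting the mutual orthogonality of $\mathcal{L}$.

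Part (b) is the most direct. If $\rSoc(V_i) \cong \rTop(V_j)$ for some $i \neq j$, set $S := \rTop(V_j)$ and compose the canonical surjection $V_j \twoheadrightarrow \rTop(V_j) = S$ with the canonical inclusion $S \cong \rSoc(V_i) \hookrightarrow V_i$. Since both maps are nonzero and $S \neq 0$, the composite is a nonzero element of $\Hom_Q(V_j, V_i)$, contradicting orthogonality; no length comparison is needed here.

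For parts (a) and (c) I would compare regular lengths. For (a), suppose $\rSoc(V_i) \cong \rSoc(V_j)$ and, without loss of generality, $r\ell(V_i) \leq r\ell(V_j)$. By the uniserial structure of $V_j$ exhibited in Remark \ref{AR}, $V_j$ has a regular subrepresentation $W$ of regular length $r\ell(V_i)$ whose regular socle is $\rSoc(V_j) \cong \rSoc(V_i)$; since $W$ and $V_i$ then share the same regular socle and regular length, Theorem \ref{runique} gives $W \cong V_i$, so the inclusion $W \hookrightarrow V_j$ yields a nonzero map $V_i \to V_j$, a contradiction. Part (c) is dual: assuming $\rTop(V_i) \cong \rTop(V_j)$ with $r\ell(V_i) \leq r\ell(V_j)$, the representation $V_j$ has a regular quotient $W'$ of regular length $r\ell(V_i)$ with $\rTop(W') = \rTop(V_j) \cong \rTop(V_i)$, whence $W' \cong V_i$ by Theorem \ref{runique}, and the quotient map produces a nonzero map $V_j \to V_i$.

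The only real subtlety, and the step I would be most careful about, is the bookkeeping that identifies the relevant sub/quotient of the longer representation with the shorter one. This rests on two facts I would isolate first: that a regular indecomposable is pinned down by its regular socle (respectively top) together with its regular length, and that Remark \ref{AR} produces precisely the subrepresentations sharing the socle, and the quotients sharing the top, of any prescribed regular length. Once these are in place, each part reduces to exhibiting an explicit nonzero homomorphism that violates the orthogonality hypothesis.
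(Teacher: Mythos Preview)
Your proof is correct and follows essentially the same approach as the paper: part (b) is proved identically via the composite $V_j \twoheadrightarrow \rTop(V_j) \cong \rSoc(V_i) \hookrightarrow V_i$, and parts (a) and (c) are handled by comparing regular lengths and using the uniserial structure (Remark~\ref{AR}) to embed or project the shorter representation into the longer one. The only cosmetic difference is that the paper first notes $V_i \not\cong V_j$ forces $r\ell(V_i) \neq r\ell(V_j)$ and then assumes strict inequality, whereas you allow $r\ell(V_i) \leq r\ell(V_j)$ and let the equality case fall out as the isomorphism $V_i \cong V_j$, which you have already excluded.
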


\begin{proof}
\leavevmode
\be[a)]
\item Suppose $\rSoc(V_{i})\cong \rSoc(V_{j})$. Since $V_{i} \not \cong V_{j}$, we know  $r\ell(V_{i}) \neq r\ell(V_{j})$. Without loss of generality, assume that  $r\ell(V_{i}) <  r\ell(V_{j})$. Then by Remark \ref{AR}, $V_{i} \hookrightarrow V_{j}$, so $\Hom_Q(V_{i}, V_{j}) \neq 0$ (contradiction). (c) is similar.

\item If $\rSoc(V_{i}) \cong $\rTop$(V_{j})$ then:
\[ V_{j} \twoheadrightarrow \rTop(V_{j})\cong \rSoc(V_{i}) \hookrightarrow V_{i} \] Thus $\Hom_Q(V_{j}, V_{i}) \neq 0$, which is a contradiction. 
\end{enumerate}
\end{proof}

\begin{lemma}\label{topmap}
Let $V$ and $W$ be regular indecomposable representations lying in the same tube. If \mbox{$f:V \rightarrow W$} is a nonzero morphism then the regular top of $V$ is isomorphic to a regular composition factor of $W$.   
\end{lemma}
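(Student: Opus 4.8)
The plan is to analyze the image of $f$. Since $f \neq 0$ and the image of any morphism between regular representations is again regular, $\im f$ is a nonzero regular subrepresentation of $W$ which is simultaneously a quotient of $V$, namely $\im f \cong V/\ker f$. I would show that $\rTop(\im f) \cong \rTop(V)$ and that every regular composition factor of the subrepresentation $\im f \le W$ is a regular composition factor of $W$; combining these two facts yields the claim.

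First I would pin down the structure of $\im f$. Because $V$ is regular indecomposable, Theorem \ref{runique} tells us that $V$ is regular uniserial, so its regular subrepresentations form a single chain $0 = X_0 \subsetneq X_1 \subsetneq \ldots \subsetneq X_r = V$. Since kernels of maps between regular representations are regular, $\ker f$ is a regular subrepresentation of $V$, hence equals some $X_k$; moreover $k < r$ because $f \neq 0$. Passing to the quotient, $\im f \cong V/X_k$ is again regular uniserial, with regular subrepresentation chain $X_k/X_k \subsetneq X_{k+1}/X_k \subsetneq \ldots \subsetneq X_r/X_k$, and its regular top is $(V/X_k)/(X_{r-1}/X_k) \cong V/X_{r-1} = \rTop(V)$. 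This is precisely the surjection recorded in Remark \ref{AR}, which preserves the regular top. Writing $E = \rTop(V)$, I conclude $\rTop(\im f) \cong E$, so $E$ is a regular composition factor of $\im f$.

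Finally I would invoke Jordan--Hölder inside the abelian category $\req$ of regular representations, whose simple objects are exactly the regular simples. Since images, kernels, and cokernels of morphisms between regular representations are regular, the sequence $0 \to \im f \to W \to W/\im f \to 0$ is a short exact sequence in $\req$, so the multiset of regular composition factors of $W$ is the union of those of $\im f$ and those of $W/\im f$. In particular every regular composition factor of $\im f$---and hence $E = \rTop(V)$---is a regular composition factor of $W$, as desired.

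The routine ingredients are the regularity of images, kernels, and cokernels and the Jordan--Hölder principle in $\req$; the one point needing care is the assertion that passing from $V$ to its quotient $\im f \cong V/\ker f$ leaves the regular top unchanged. This rests entirely on the uniserial structure of $V$ furnished by Theorem \ref{runique}: quotienting by a regular subrepresentation $X_k$ with $k < r$ strips composition factors only from the regular socle end, so the regular top $\rTop(V)$ survives, exactly as in the surjection of Remark \ref{AR}.
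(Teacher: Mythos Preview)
Your proof is correct and follows essentially the same approach as the paper: analyze $\im f \cong V/\ker f$ as a regular subrepresentation of $W$, use the uniserial structure of $V$ to see that $\ker f$ sits at the socle end so that $\rTop(V)$ survives as the regular top of $\im f$, and conclude that this top is a composition factor of $W$. The paper carries this out in the explicit notation $V = (T, \tau(T), \ldots, \tau^{\ell}(T))$ and simply observes that $V/\ker f \cong (T, \ldots, \tau^{k-1}(T)) \leq W$, whereas you phrase the last step via Jordan--H\"older in $\req$; these are the same argument.
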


\begin{proof}
Write $V=(T, \ldots, \tau^{\ell}(T))$. Then $\im f \cong V/\ker{f} \leq W$. If $\ker{f}=0$, we are done. Otherwise, $\ker{f} \cong
(\tau^k(T), \ldots, \tau^{\ell}(T))$ for $ 0 < k < \ell$. In particular, $V/\ker{f} \cong (T, \ldots, \tau^{k-1}(T)) \leq W$, i.e. $T$ is the regular top of a regular subrepresentation of $W$. In particular, $T$ is a regular composition factor of $W$.
\end{proof}

Let $V= \left( E, \ldots, \tau^{\ell}(E)\right)$ and $W$ be two regular indecomposable representations lying in the same tube. We write $W \sqsubset V$ if:
\[ \rTop(W) \cong \tau^j(E) \text{~and~} \rSoc(W) \simeq \tau^i(E) \text{~with~} 0<j \leq i <\ell.
\]
We will also write $V \sqcap W$ to mean the set of regular composition factors shared by $V$ and $W$. Using this notation, we have the following lemma: 

\begin{lemma} \label{orthog}
Let $V_{1}$ and $V_{2}$ be regular Schur representations lying in the same tube. Then $V_{1}$ and $V_{2}$ are mutually orthogonal if and only if either $V_{1} \sqcap V_{2} = \varnothing$, $V_{1} \sqsubset V_{2}$,  or  $V_{2} \sqsubset V_{1}$.  
\end{lemma}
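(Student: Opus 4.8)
The plan is to reduce the statement to a clean criterion for the non-vanishing of $\Hom_Q$ between two regular indecomposables in a tube, phrased entirely in terms of regular tops, regular socles, and shared composition factors, and then to read off mutual orthogonality from the relative position of $V_1$ and $V_2$ in the tube. Throughout I will use that, since $V_1,V_2$ are Schur, Lemma \ref{dimcap} gives $r\ell(V_i)\le p$, so by Theorem \ref{runique} each $V_i$ is the uniserial module determined by a consecutive arc of \emph{distinct} regular simples in its (cyclic, period $p$) $\tau$-orbit. I will freely identify $V_i$ with this arc, with $\rTop(V_i)$ and $\rSoc(V_i)$ its two endpoints and $V_1\sqcap V_2$ the intersection of the two arcs.

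First I would establish the key criterion: for regular indecomposables $V,W$ in the same tube, $\Hom_Q(V,W)\neq 0$ if and only if $\rTop(V)$ is a regular composition factor of $W$ and $\rSoc(W)$ is a regular composition factor of $V$. For necessity, the image of a nonzero $f\colon V\to W$ is $\im f\cong V/\ker f\le W$; being a quotient of $V$ it has regular top $\rTop(V)$, and being a submodule of $W$ it has regular socle $\rSoc(W)$ (this is exactly the image analysis in the proof of Lemma \ref{topmap}), so $\rTop(V)$ is a factor of $W$ and $\rSoc(W)$ is a factor of $V$. For sufficiency, if both conditions hold then $\rTop(V)$ and $\rSoc(W)$ both lie on the common arc $V\sqcap W$; by Remark \ref{AR} the consecutive segment $U$ running from $\rTop(V)$ down to $\rSoc(W)$ is at once a quotient of $V$ (an initial segment containing its top) and a submodule of $W$ (a terminal segment containing its socle), so the composite $V\twoheadrightarrow U\hookrightarrow W$ is nonzero. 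Schur-ness is what guarantees the arcs carry no repeated factor, so this segment is unambiguous.

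With the criterion in hand, mutual orthogonality of $V_1,V_2$ becomes the conjunction of ``$\rTop(V_1)\notin V_2$ or $\rSoc(V_2)\notin V_1$'' and ``$\rTop(V_2)\notin V_1$ or $\rSoc(V_1)\notin V_2$'' (writing $\in$ for ``is a regular composition factor of''). The implication $(\Leftarrow)$ is then routine. If $V_1\sqcap V_2=\varnothing$, no top or socle of one is a factor of the other, so both $\Hom$-spaces vanish by Lemma \ref{topmap}. If $V_2\sqsubset V_1$ (or symmetrically $V_1\sqsubset V_2$) then, by the very definition of $\sqsubset$, the endpoints of $V_2$ are \emph{interior} factors of $V_1$ (the indices $0<j\le i<\ell$) while the endpoints of $V_1$ are not factors of $V_2$ (here distinctness of the factors is used); feeding this into the two displayed conditions shows both $\Hom$-spaces vanish.

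The substantive direction is $(\Rightarrow)$: assuming $V_1,V_2$ mutually orthogonal and $V_1\sqcap V_2\neq\varnothing$, I must show $V_1\sqsubset V_2$ or $V_2\sqsubset V_1$. Here I would first invoke Lemma \ref{soctop}, which applies since $(V_1,V_2)$ is now an orthogonal Schur sequence of regular representations, to conclude that $V_1,V_2$ share no top and no socle and that no top of one equals a socle of the other; this eliminates all ``flush'' configurations, i.e. nestings or overlaps that touch at an endpoint. It then remains to rule out a genuine partial overlap of the two arcs: in any such configuration some endpoint of one arc is an interior factor of the other in a way that makes both membership conditions hold in one direction, producing a nonzero $\Hom$ and contradicting orthogonality. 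Carrying this out is the crux and the main obstacle, precisely because the $\tau$-orbit is a \emph{cycle} of length $p$: when $r\ell(V_1)+r\ell(V_2)>p$ the two arcs can overlap in two disjoint sub-arcs, and one must do the cyclic bookkeeping carefully to verify that every non-nested position still forces a nonzero map, while a nested position with distinct endpoints is exactly the strict interior containment recorded by $\sqsubset$. Once this case analysis is complete, the only possibility left under mutual orthogonality with nonempty overlap is strict nesting, which is precisely $V_1\sqsubset V_2$ or $V_2\sqsubset V_1$.
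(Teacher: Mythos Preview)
Your approach is essentially the paper's: both directions rest on Lemma~\ref{soctop} for the distinctness of tops and socles, Lemma~\ref{topmap} (image analysis) for the vanishing of $\Hom$, and the factor-through-a-common-segment construction of Remark~\ref{AR} to produce nonzero maps. Your preliminary iff criterion for $\Hom_Q(V,W)\neq 0$ is a clean repackaging of exactly what the paper uses piecemeal, and your $(\Leftarrow)$ argument matches the paper's.

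For $(\Rightarrow)$, however, you overstate the difficulty and leave the core step as a deferred ``case analysis''. There is in fact no cyclic bookkeeping needed, and the two-disjoint-overlap worry is a red herring. The paper simply fixes \emph{any} $E\in V_1\sqcap V_2$, writes $V_i=(\tau^{-t_i}(E),\ldots,E,\ldots,\tau^{s_i}(E))$ with \emph{integer} exponents (possible since $r\ell(V_i)\le p$), assumes without loss $s_2>s_1$, and then observes that if $t_1\ge t_2$ one has $V_2\twoheadrightarrow(\tau^{-t_2}(E),\ldots,\tau^{s_1}(E))\hookrightarrow V_1$, contradicting orthogonality; hence $t_1<t_2$, which is exactly $V_1\sqsubset V_2$. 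Anchoring at a shared $E$ and working with signed integer exponents rather than residues modulo $p$ collapses your anticipated case split into a two-line inequality check.
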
 

\begin{proof}
$(\Rightarrow)$ If $V_{1} \sqcap V_{2} = \varnothing$, we are done. Otherwise, there exists $E \in  V_{1} \sqcap V_{2}$ and we can write: 
\begin{align*}
V_{1} & = (\tau^{-t_{1}}(E), \ldots, E, \ldots, \tau^{s_{1}}(E) ) \\
V_{2} & = (\tau^{-t_{2}}(E), \ldots, E, \ldots, \tau^{s_{2}}(E) )
\end{align*}
Since $V_{1}$ and $V_{2}$ are mutually orthogonal,  Lemma \ref{soctop} implies  $\rSoc(V_{1})$, $\rSoc(V_{2})$, $\rTop(V_{1})$, $\rTop(V_{2})$ are all distinct. So, without loss of generality, let us assume $s_{2} > s_{1}$. We will show that $V_1 \sqsubset V_2$.  It suffices to show $t_{1} < t_{2}$. If this is not the case, then by Remark \ref{AR} we have the following morphisms:
\[ V_{2} \twoheadrightarrow \left (\begin{tabular}{c} $\tau^{-t_{2}}(E)$ \\ $\vdots$ \\ $\tau^{s_{1}}(E)$  \end{tabular} \right ) \hookrightarrow V_{1}\] which would contradict orthogonality. Hence, $V_1 \sqsubset V_2$.\\

\noindent
$(\Leftarrow)$ If $V_1 \sqcap V_2 = \varnothing$ then Lemma \ref{topmap} implies that $V_1$ and $V_2$ are mutually orthogonal. 

Now, let us suppose $V_1 \sqsubset V_2$ (the case when $V_2 \sqsubset V_1$ is similar). Let $E$ be the regular top of $V_2$ and $\ell_2+1$ its regular length. Then, by definition, there exist $0<j\leq i <\ell_2$ such that:
\begin{itemize}
\item $\rTop(V_1)=\tau^j(E)$;
\item $\rSoc(V_1)=\tau^i(E)$;
\item the composition factors of $V_1$ are of the form $\tau^k(E)$ with $j \leq k \leq i$;
\item the regular length of $V_1$ is $i-j+1$.
\end{itemize}

Let us prove first that $\Hom_Q(V_2,V_1)=0$. It this were not the case, then the regular top of $V_2$ would be isomorphic to a composition factor of $V_1$ by Lemma \ref{topmap}, i.e. $E$ would be isomorphic to $\tau^k(E)$ with $0<k<\ell_2$. But this is a contradiction since the period of $E$ is strictly larger than $\ell_2$ as $V_2$ is assumed to be a regular Schur representation.

To check that $\Hom_Q(V_1,V_2)=0$, we will work with the regular socle of $V_1$. Specifically, let us assume for a contradiction that there exists a non-zero morphism $f \in \Hom_Q(V_1,V_2)$. Then, using Remark \ref{AR}, we get that $\rSoc(V_1/\ker{f})=\tau^l(\rTop(V_1))=\tau^{l+j}(E)$ for some $0 \leq l \leq i-j$. On the other hand, we have that $\rSoc(\im{f})=\rSoc(V_2)=\tau^{\ell_2}(E)$, and so $\tau^{\ell_2}(E) \cong \tau^{l+j}(E)$ (contradiction).
\end{proof}

\begin{definition} Let $\mathcal{L}$ be an orthogonal Schur sequence of regular representations lying in a tube $\mathcal T$. We say that $\mathcal{L}$ is \emph{maximal} if every regular simple of $\mathcal{T}$ is either an element of $\mathcal{L}$, or the regular top or regular socle of a (unique) representation in $\mathcal{L}$.
\end{definition} 

\begin{remark} Let $\mathcal T$ be a tube and $\mathcal L \subseteq \mathcal T$ an orthogonal Schur sequence. Then we can always extend $\mathcal L$ to a maximal orthogonal Schur sequence by simply adding to $\mathcal L$ the regular simples of $\mathcal T \snot \mathcal L$.
\end{remark}

It turns out that working with maximal orthogonal Schur sequences greatly simplifies the task of solving for a common stability weight. In what follows, we restrict our attention to maximal sequences. 

\begin{corollary} \label{clarity}
Let $\mathcal{L}$ be a maximal orthogonal Schur sequence lying in a non-homogenous tube $\mathcal{T}$ of period $p$. Let $V \in \mathcal{L}$ with $\rTop(V)=T$ and $r\ell(V)=\ell+1$, i.e. $V = (T, \tau(T), \ldots, \tau^{\ell }(T))$. Then for any $ 0 < k < \ell $, one of the following three cases holds:
\begin{enumerate}
\item $\tau^{k}(T) \in \mathcal{L}$; 
\item There exists a unique $X \in \mathcal{L}$ and $ k < j < \ell $ such that $\tau^k(T)=\rTop(X)$ and $\tau^j(T)=\rSoc(X)$; 
\item There exists a unique $X \in \mathcal{L}$ and $ 0 < j < k < \ell  $ such that $\tau^k(T)=\rSoc(X)$ and $\tau^j(T)=\rTop(X)$. 
\end{enumerate}
\end{corollary}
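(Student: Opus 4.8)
The plan is to read off the trichotomy directly from the definition of a maximal orthogonal Schur sequence, using Lemma \ref{orthog} to pin down the relative position inside $V$ of the representation whose top or socle is $\tau^k(T)$. First I would fix $V=(T,\tau(T),\ldots,\tau^\ell(T))$ and $0<k<\ell$, and observe that since $V$ is a regular Schur representation, Lemma \ref{dimcap} gives $r\ell(V)=\ell+1\leq p$; hence the composition factors $T,\tau(T),\ldots,\tau^\ell(T)$ are pairwise non-isomorphic regular simples, and $\tau^k(T)$ is a regular simple of $\mathcal{T}$ occurring as an \emph{interior} composition factor of $V$ (it is neither the top $T$ nor the socle $\tau^\ell(T)$). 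By maximality of $\mathcal{L}$, exactly one of the following holds: $\tau^k(T)\in\mathcal{L}$; or $\tau^k(T)=\rTop(X)$ for a unique $X\in\mathcal{L}$; or $\tau^k(T)=\rSoc(X)$ for a unique $X\in\mathcal{L}$. The first option is immediately case (1).

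Next I would treat the sub-case $\tau^k(T)=\rTop(X)$. If $X$ were regular simple then $X=\tau^k(T)\in\mathcal{L}$, which is case (1); so I may assume $r\ell(X)\geq 2$. Writing $j$ for the index with $\rSoc(X)=\tau^j(T)$ (this exists by Theorem \ref{runique}, as $X$ lies in $\mathcal{T}$ with top $\tau^k(T)$), non-simplicity of $X$ forces $k<j$. It then remains only to locate $X$ inside $V$: since $V$ and $X$ both lie in $\mathcal{T}$, are mutually orthogonal, and share the factor $\tau^k(T)$ (so $V\sqcap X\neq\varnothing$), Lemma \ref{orthog} forces either $V\sqsubset X$ or $X\sqsubset V$.

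The one point requiring care, and the crux of the whole argument, is ruling out $V\sqsubset X$. By the definition of $\sqsubset$, the relation $V\sqsubset X$ means the composition factors of $V$ form a contiguous block lying strictly between $\rTop(X)$ and $\rSoc(X)$; in particular $\rTop(X)$ is then \emph{not} a composition factor of $V$. But $\rTop(X)=\tau^k(T)$ \emph{is} a composition factor of $V$, a contradiction. Hence $X\sqsubset V$, and unwinding the definition of $\sqsubset$ gives $\rTop(X)=\tau^k(T)$ and $\rSoc(X)=\tau^j(T)$ with $0<k\leq j<\ell$; combined with $k<j$ this is exactly case (2).

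The remaining sub-case $\tau^k(T)=\rSoc(X)$ is handled by the mirror-image argument: non-simplicity of $X$ produces an index $j<k$ with $\rTop(X)=\tau^j(T)$, and $V\sqsubset X$ is again impossible because $\rSoc(X)=\tau^k(T)$ is a composition factor of $V$; so Lemma \ref{orthog} yields $X\sqsubset V$, hence $0<j<k<\ell$, which is case (3). The main obstacle is therefore the cyclic bookkeeping in the $\tau$-orbit needed to guarantee that the forced containment is $X\sqsubset V$ rather than $V\sqsubset X$; once the interior-factor observation above is in place, this is immediate and the index inequalities fall out directly from the definition of $\sqsubset$. Uniqueness of $X$ in cases (2) and (3) is inherited verbatim from the maximality hypothesis.
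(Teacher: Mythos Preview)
Your argument is correct and follows essentially the same route as the paper's proof: use maximality to produce $X\in\mathcal{L}$ with $\tau^k(T)$ as its top or socle, rule out $V\sqsubset X$, and then invoke Lemma~\ref{orthog} to force $X\sqsubset V$. Your proof is more explicit than the paper's---in particular you spell out why $V\sqsubset X$ fails (the top or socle of $X$ would have to lie outside the composition factors of $V$, contradicting $\rTop(X)=\tau^k(T)$ or $\rSoc(X)=\tau^k(T)$), whereas the paper records this as a single clause; the paper also cites Lemma~\ref{soctop} rather than the maximality clause for uniqueness, but these are equivalent here.
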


\begin{proof} Suppose $\tau^k(T) \not \in \mathcal{L}$. Since $\mathcal{L}$ is maximal, we know there exists $X \in \mathcal{L}$ such that $\tau^k(T)$ is either the regular top or the regular socle of $X$. Either way $V \not \sqsubset X$, so  Lemma \ref{orthog} implies $X \sqsubset V$. The uniqueness part of our claim follows from Lemma \ref{soctop}.
\end{proof} 

Next we show that to obtain a stability weight for a regular representation, we only need to check King's criterion for the regular subrepresentations. 

\begin{lemma}\label{regularstable12} Let $\mathcal{L}=(V_1, \ldots, V_r)$  be a collection of regular representations and $\theta$ a weight such that each $V_{i}$ is $\theta$-stable in $\req$, i.e. for each $i$, $\theta( \dv V_{i})=0$ and $\theta(\dv V_{i}^{\prime}) < 0 $ for all proper regular subrepresentations $0 \neq V'_i<V_{i}$.  Then there exists a weight $\sigma$ such that each $V_{i}$ in $\mathcal{L}$ is $\sigma$-stable in $\rep{Q}$. 
\end{lemma}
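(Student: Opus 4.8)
The plan is to perturb $\theta$ by a large multiple of the defect weight. Writing $\langle \delta, \cdot \rangle$ for the weight sending $\gamma$ to $\la \delta, \gamma \ra$, I would set
\[
\sigma = \theta + N \langle \delta, \cdot \rangle
\]
for a sufficiently large integer $N$, and then verify King's numerical criterion (Theorem \ref{King-general-thm}(2)) for each $V_i$ in the \emph{full} category $\rep(Q)$. Since every $V_i$ is regular, its defect $\la \delta, \dv V_i \ra$ vanishes, so $\sigma(\dv V_i) = \theta(\dv V_i) + N \la \delta, \dv V_i \ra = 0$ for every $N$; thus the equality condition of King's criterion is automatic, and the whole task reduces to arranging the strict inequalities on proper subrepresentations.

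The key structural observation is that a subrepresentation of a regular representation can never contain a preinjective indecomposable summand. Indeed, if $W$ were a preinjective summand of some $0 \neq V_i' < V_i$, the composite $W \hookrightarrow V_i' \hookrightarrow V_i$ would be nonzero, hence would give a nonzero homomorphism from $W$ into one of the regular indecomposable summands of $V_i$, contradicting Lemma \ref{homext}(2). Consequently each proper subrepresentation $V_i'$ decomposes into preprojective and regular indecomposables only, so its defect satisfies $\la \delta, \dv V_i' \ra \leq 0$, with equality precisely when $V_i'$ is regular. This is exactly the dichotomy the defect correction is designed to exploit: on regular subrepresentations the correction term is invisible, and on all other subrepresentations it is strictly negative.

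It then remains to choose $N$. For subrepresentations that are themselves regular, the defect term vanishes and $\sigma(\dv V_i') = \theta(\dv V_i') < 0$ by hypothesis, for every $N$. For the remaining proper subrepresentations at least one preprojective summand is present, forcing the integer $\la \delta, \dv V_i' \ra \leq -1$. Since the dimension vectors of subrepresentations of a fixed $V_i$ range over the finite set of lattice points $\gamma$ with $0 \leq \gamma \leq \dv V_i$, the values $\theta(\gamma)$ admit a common upper bound $M$, which may be taken uniform over the finitely many indices $i$; choosing $N > M$ then yields $\sigma(\dv V_i') \leq M - N < 0$. This verifies $\sigma$-stability of each $V_i$ in $\rep(Q)$.

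I expect the only genuinely delicate point to be the structural claim that no preinjective summand can sit inside a regular representation, as it is what guarantees nonpositivity of the defect of every proper subrepresentation and hence that the perturbation only ever helps. Once that is in hand, the selection of $N$ is a routine boundedness argument resting on the finiteness of the set of candidate subrepresentation dimension vectors.
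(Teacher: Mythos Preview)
Your proof is correct and follows essentially the same approach as the paper: perturb $\theta$ by a sufficiently large integer multiple of the defect weight $\langle \delta,\cdot\rangle$, using the structural fact that subrepresentations of a regular representation have only preprojective and regular indecomposable summands, so that the defect correction is zero on regular subrepresentations and strictly negative on the rest. The only cosmetic difference is that the paper bounds $\theta$ over the actual non-regular subrepresentations and then splits into cases according to the sign of that bound, whereas you take the cruder but perfectly adequate bound over all lattice points $0\leq\gamma\leq\dv V_i$; this changes nothing of substance.
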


\begin{proof}
A non-regular subrepresentation of a regular indecomposable is either preprojective or a direct sum of regular and preprojective subrepresentations. In particular, any non-regular subrepresentation has negative defect.  

Let $N = \max \{ \theta(\dv X_i^{\prime}) | X_i^{\prime} \leq V_{i} \mbox{ is non-regular }, V_{i} \in \mathcal{L} \}$ and define 
\[\sigma =  \begin{cases} \theta & \mbox{if } N < 0\\
					\theta + \langle \delta, \cdot \rangle & \mbox{if } N =0\\
					\theta + (N +1)\langle \delta, \cdot \rangle & \mbox{if } N > 0\\
			\end{cases}\]
Choose $V_i \in \mathcal{L}$. We will show $V_{i}$ is $\sigma$-stable. If $N<0$, there is nothing to show.  Suppose  $N > 0$.  First, observe that $\sigma(\dv V_i) = \theta(\dv V_i ) + (N+1) \langle \delta, \dv V_i \rangle = 0$. 
For a regular subrepresentation  $V^{\prime}_{i} \leq V_i$, we have:
\[ \sigma(\dv V^{\prime}_i) = \theta(\dv V_i^{\prime}) + (N+1) \langle \delta, \dv V^{\prime}_i \rangle = \theta(\dv V_i^{\prime}) < 0\]
For a non-regular subrepresentation $X^{\prime}_i \leq V_i$, we have  
\[ \sigma(\dv X^{\prime}_i) = \theta(\dv X_i^{\prime}) + (N+1) \langle \delta, \dv X^{\prime}_i \rangle \leq  \theta(\dv X_i^{\prime}) - (N +1) < 0\]
Thus, each $V_i$ is $\sigma$-stable by Theorem \ref{King-general-thm}. The case when $N=0$ is similar. 
\end{proof}

\begin{rmk} We point out that for a homogeneous Schur representation $V$ to be $\theta$-stable in $\req$, we only need to check that $\theta(\delta)=0$. This because for such a $V$, $\dv V = \delta$ and its regular subrepresentations are $\{0\}$ and $V$ by Lemma \ref{dimcap}.
\end{rmk}

\begin{lemma} \label{depends} \cite[Lemma 6.1]{IngPacTom}
Let  $\mathcal{T}_{1}, \ldots, \mathcal{T}_{N}$ be the non-homogeneous tubes of $Q$, with $p_{i}$ the period of $\mathcal{T}_{i}$.  Let $\{\epsilon_{ij}\}$ be the set of dimension vectors of all non-homogeneous regular simple representations, with $1 \leq i \leq N$ and $ 1 \leq j \leq p_{i}$.  Suppose there exists constants $c_{ij}$ such that $\dsp \sum_{i,j} c_{ij} \epsilon_{ij} = 0$.  Then for each $i$, $\dsp C_{i}:=c_{i1}=\ldots =c_{ip_{1}}$. Furthermore, $\dsp C_{1} + \ldots + C_{N} = 0$.  
\end{lemma}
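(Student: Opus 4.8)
The plan is to exploit the Euler form $\langle \cdot, \cdot \rangle$ as a pairing that detects the internal structure of each tube, and to reduce everything to two standard facts about the regular category $\req$ of a Euclidean quiver. First, distinct tubes are mutually orthogonal: if $E$ and $F$ are regular simple representations lying in different non-homogeneous tubes, then $\Hom_Q(E,F) = \Ext^1_Q(E,F) = 0$, hence $\langle \dv E, \dv F \rangle = 0$. Second, inside a single tube $\mathcal{T}_i$ the regular simples $E_{i1}, \ldots, E_{ip_i}$ are pairwise orthogonal bricks permuted cyclically by $\tau$, so that $\Hom_Q(E_{ij}, E_{ik}) = K$ if $j = k$ and $0$ otherwise, while the Auslander--Reiten formula $\Ext^1_Q(E_{ij}, E_{ik}) \cong D\Hom_Q(E_{ik}, \tau E_{ij})$ shows $\Ext^1_Q(E_{ij}, E_{ik})$ is one-dimensional exactly when $E_{ik} \cong \tau E_{ij}$ and zero otherwise. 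Writing indices modulo $p_i$ so that $\tau E_{ij} = E_{i,j-1}$, this yields the explicit values
\[
\langle \epsilon_{ij}, \epsilon_{ik} \rangle = \delta_{jk} - [\,k \equiv j-1\,],
\]
where $[\,\cdot\,]$ equals $1$ when the congruence holds modulo $p_i$ and $0$ otherwise; note this is consistent with $q(\epsilon_{ij}) = 1$, since each $\epsilon_{ij}$ is a real Schur root (Lemma \ref{reg-simple-properties}).

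With these formulas in hand, I would first prove the constancy statement $c_{i1} = \ldots = c_{ip_i}$. Fix a tube index $k$ and an index $l$, and apply $\langle \epsilon_{kl}, - \rangle$ to the given relation $\sum_{i,j} c_{ij}\epsilon_{ij} = 0$. The inter-tube orthogonality annihilates every term with $i \neq k$, leaving only the contribution from $\mathcal{T}_k$:
\[
0 = \sum_{j} c_{kj} \langle \epsilon_{kl}, \epsilon_{kj} \rangle = c_{kl} - c_{k,l-1}.
\]
Since $l$ was arbitrary, this telescoping identity forces $c_{kl} = c_{k,l-1}$ for every $l$, and running around the $\tau$-cycle yields $c_{k1} = c_{k2} = \ldots = c_{kp_k} =: C_k$. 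As this holds for every tube $k$, the first assertion follows.

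Finally, for the relation among the $C_i$ I would substitute the constancy back into the hypothesis and invoke Lemma \ref{reg-simple-properties}(iv), which gives $\sum_{j=1}^{p_i} \epsilon_{ij} = \delta$ for each non-homogeneous tube. Then
\[
0 = \sum_{i,j} c_{ij}\epsilon_{ij} = \sum_{i} C_i \sum_{j} \epsilon_{ij} = \Big(\sum_{i} C_i\Big)\delta,
\]
and since $\delta \neq 0$ we conclude $C_1 + \ldots + C_N = 0$. The only genuinely nontrivial input is the pair of Euler-form computations in the first paragraph; these are standard consequences of Auslander--Reiten theory for tame hereditary algebras (as in \cite{CB}), after which the argument is just a one-line pairing followed by a telescoping sum. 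The main point to be careful about is the cyclic bookkeeping of the $\tau$-indices modulo $p_i$, so that the telescoping genuinely closes up around the entire orbit rather than leaving the two endpoints unrelated.
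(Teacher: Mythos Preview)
Your argument is correct. The paper itself does not prove this lemma; it merely cites it as \cite[Lemma 6.1]{IngPacTom} and uses it as a black box. So there is no ``paper's own proof'' to compare against, and your write-up supplies exactly the self-contained justification the paper omits.

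The two ingredients you rely on---inter-tube orthogonality for the Euler form, and the explicit values $\langle \epsilon_{ij},\epsilon_{ik}\rangle=\delta_{jk}-[k\equiv j-1\pmod{p_i}]$ coming from the Auslander--Reiten formula---are precisely the standard structural facts about non-homogeneous tubes of a Euclidean quiver (see \cite{CB}), and once they are in place the telescoping computation $0=c_{kl}-c_{k,l-1}$ and the substitution $\sum_j\epsilon_{ij}=\delta$ from Lemma~\ref{reg-simple-properties}(iv) finish the job cleanly. One cosmetic point: the statement in the paper has the typo $c_{ip_1}$ where $c_{ip_i}$ is meant; your proof implicitly reads it the correct way.
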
 

While the next corollary is stated for the case of three non-homogenous tubes, similar statements can be made in the other cases.

\begin{corollary} \label{crosstube} Suppose there are exactly three non-homogenous tubes and let \[ \{E_1, \ldots, E_{p_1}\}, \{L_{1}, \ldots, L_{p_2}\}, \{K_1, \ldots, K_{p_3}\}\] be the regular simples of each of these tubes. Then the vectors 
\[\dv E_1, \ldots, \dv E_{p_1}, \dv L_{1}, \ldots, \dv  L_{p_2 - 1}, \dv K_1, \ldots, \dv K_{p_3 - 1} \] are linearly independent over $\mbb{R}$.  
\end{corollary}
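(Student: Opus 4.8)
The plan is to derive this purely formally from Lemma~\ref{depends}, whose hypothesis already catalogs every linear relation among the dimension vectors of the non-homogeneous regular simples. First I would match notation with the lemma by setting $\epsilon_{1j}=\dv E_j$ ($1 \leq j \leq p_1$), $\epsilon_{2j}=\dv L_j$ ($1 \leq j \leq p_2$), and $\epsilon_{3j}=\dv K_j$ ($1 \leq j \leq p_3$), so that the three non-homogeneous tubes correspond to $N=3$.

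Next I would suppose there is a real relation
\[
\sum_{j=1}^{p_1} a_j\, \dv E_j + \sum_{j=1}^{p_2-1} b_j\, \dv L_j + \sum_{j=1}^{p_3-1} d_j\, \dv K_j = 0,
\]
and promote it to a relation over the \emph{full} set of regular simples by assigning coefficient $0$ to the two omitted vectors $\dv L_{p_2}$ and $\dv K_{p_3}$; that is, I set $c_{1j}=a_j$, then $c_{2j}=b_j$ for $j<p_2$ with $c_{2,p_2}=0$, and $c_{3j}=d_j$ for $j<p_3$ with $c_{3,p_3}=0$. Now Lemma~\ref{depends} applies and produces constants $C_1,C_2,C_3$ with $c_{ij}=C_i$ for all $j$ and $C_1+C_2+C_3=0$. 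The padding is exactly what does the work: since $c_{2,p_2}=0$, constancy within the second tube gives $C_2=0$ and hence every $b_j$ vanishes, and likewise $c_{3,p_3}=0$ forces $C_3=0$ and every $d_j=0$. Feeding this into $C_1+C_2+C_3=0$ yields $C_1=0$, so all the $a_j$ vanish as well. Thus the only relation is trivial, and the listed vectors are linearly independent.

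As a sanity check, the number of listed vectors is $p_1+(p_2-1)+(p_3-1)=p_1+p_2+p_3-2$, which matches the rank one should expect: Lemma~\ref{depends} shows that the space of relations among all $p_1+p_2+p_3$ simple dimension vectors is cut out by the conditions ``$C_i$ constant on each tube'' together with ``$\sum_i C_i=0$'', hence is two-dimensional. There is no substantial obstacle once Lemma~\ref{depends} is invoked; the only point requiring care is the asymmetry of the selection—keeping all of the first tube while discarding exactly one simple from each of the other two—and verifying that assigning those discarded coordinates the value $0$ is precisely what collapses $C_2$ and $C_3$, and then, through the sum condition, $C_1$ as well.
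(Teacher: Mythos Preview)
Your argument is correct and in fact more economical than the paper's. Both proofs begin by invoking Lemma~\ref{depends} (after padding the missing coefficients of $\dv L_{p_2}$ and $\dv K_{p_3}$ with zero) to conclude that the coefficients are constant within each tube. You then observe directly that the padded zeros force $C_2=C_3=0$, and the sum condition $C_1+C_2+C_3=0$ from Lemma~\ref{depends} gives $C_1=0$. The paper, by contrast, extracts from Lemma~\ref{depends} only the constancy (calling the constants $a,b,c$), rewrites the relation via $\sum_i \dv E_i=\sum_j \dv L_j=\sum_s \dv K_s=\delta$ as $(a+b+c)\delta = b\,\dv L_{p_2}+c\,\dv K_{p_3}$, and then applies the Tits form $q$ (using $q(\delta)=0$, $q(\dv L_{p_2})=q(\dv K_{p_3})=1$, and the vanishing of Hom and Ext between distinct tubes to kill the cross terms) to deduce $b^2+c^2=0$. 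Your route sidesteps the quadratic-form computation entirely by exploiting the full strength of Lemma~\ref{depends}; the paper's route, though slightly longer, makes the Euclidean geometry of the root system visible at this step.
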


\begin{proof}
Suppose $\sum_{i=1}^{p_1} a_i \, \dv E_{i} + \sum _{j=1}^{p_2 -1} b_{j} \, \dv L_{j} + \sum_{s=1}^{p_3 -1} c_{s} \, \dv K_{s} = 0$. By Lemma \ref{depends}, there exist $a,b,c \in \mbb{R}$ such that for each $i, j, s$ 
\[ a : = a_i, b:=b_j, \mbox{ and } c:=c_s \] Furthermore, we can write $\sum _{j=1}^{p_2 -1} b \, \dv L_{j} = b(\delta - \dv L_{p_2})$ and $\sum_{s=1}^{p_3 -1} c \, \dv K_{s} = c(\delta - K_{p_3})$. Putting everything together gives:
\begin{equation} \label{terry}  (a + b + c) \delta = b \,\dv  L_{p_2} + c \, \dv  K_{p_3}. \end{equation}  Note that indecomposable representations lying in different tubes are always mutually orthogonal and do not have any non-trivial extensions. Thus applying the quadratic form to (\ref{terry}) gives:
\[ 0 = b^{2} + c^{2} +0 + 0. \] 
So $b=c=0$, and thus $a=0$. 
\end{proof}

\begin{prop} \label{sol1} Let $\mathcal{L}$ be a maximal orthogonal Schur sequence lying in a non-homogenous tube $\mathcal{T}$. Then there exists a weight $\theta$ such that each representation in $\mathcal{L}$ is $\theta$-stable. 
\end{prop}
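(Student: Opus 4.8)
The plan is to reduce, via Lemma \ref{regularstable12}, to producing a weight $\theta$ for which every $V \in \mathcal{L}$ is $\theta$-stable inside the regular category $\req$; that is, $\theta(\dv V)=0$ and $\theta(\dv V')<0$ for every proper nonzero regular subrepresentation $V'<V$. Since each $V\in\mathcal{L}$ is regular uniserial by Theorem \ref{runique}, writing $V=(T,\tau(T),\ldots,\tau^{\ell}(T))$ its proper nonzero regular subrepresentations are exactly the ``tails'' $(\tau^{k}(T),\ldots,\tau^{\ell}(T))$ with $1\le k\le \ell$, by Remark \ref{AR}. Hence $\theta$-stability in $\req$ amounts to: the total of the numbers $\theta(\dv \tau^{j}(T))$ over all composition factors of $V$ is $0$, while every proper suffix sum (from an interior factor down to the regular socle) is strictly negative, equivalently every proper prefix sum read from the regular top is strictly positive.

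I will define the weight directly on the regular simples of $\mathcal{T}$. Because $\mathcal{L}$ is maximal, every regular simple $S$ of $\mathcal{T}$ plays exactly one of three mutually exclusive roles: it is a member of $\mathcal{L}$, or it is the regular top, or the regular socle, of a unique $V\in\mathcal{L}$ with $r\ell(V)\ge 2$; the exclusivity follows from Lemma \ref{orthog} and Lemma \ref{soctop}. I set $\theta(\dv S):=0$ in the first case, $\theta(\dv S):=1$ if $S$ is a regular top, and $\theta(\dv S):=-1$ if $S$ is a regular socle. Since each $V$ of regular length $\ge 2$ contributes exactly one top and one socle, these prescribed values sum to $0$ over $\mathcal{T}$. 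Because the dimension vectors of the non-homogeneous regular simples are linearly independent up to one per tube (Corollary \ref{crosstube} and its analogues), the values are realized by a weight with $\theta(\delta)=0$, after taking $\theta$ to vanish on the simples of the remaining tubes and clearing denominators to make $\theta$ integral; clearing denominators scales all the quantities above by a positive integer and so preserves the sign and vanishing conditions.

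It remains to verify the two numerical conditions, and this is where the nesting combinatorics does the work. Reading the composition factors of $V=(T,\ldots,\tau^{\ell}(T))$ from top to socle and recording $+1$ for a top, $-1$ for a socle, and $0$ for a member of $\mathcal{L}$, Corollary \ref{clarity} guarantees that every interior factor is either itself in $\mathcal{L}$ or is the regular top or socle of some $X\sqsubset V$, and in either case the definition of $\sqsubset$ forces the top of $X$ to be read strictly before its socle. By the laminarity of $\mathcal{L}$ inside the tube (Lemma \ref{orthog}, which forces any two members to be nested or to share no composition factor), the interior factors therefore form a balanced string of parentheses with neutral singletons. Consequently the total over all factors of $V$ is $0$, giving $\theta(\dv V)=0$, while the running sum after reading $V$'s own regular top is always at least $1$ throughout the interior, returning to $0$ only after reading $V$'s regular socle; this is precisely the strict prefix-positivity required. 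Thus every $V\in\mathcal{L}$ is $\theta$-stable in $\req$, and Lemma \ref{regularstable12} then upgrades $\theta$ to a weight making each representation of $\mathcal{L}$ stable in $\rep(Q)$.

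The main obstacle is exactly this strict prefix-positivity: one must be certain that the only unmatched ``opening'' among the factors of $V$ is $V$'s own regular top, so that the running sum never drops back to $0$ in the interior. This is what Corollary \ref{clarity} together with the nested structure of Lemma \ref{orthog} provide, and it is the reason for passing to maximal sequences in the first place. A secondary, purely technical point is the realizability of the prescribed values $\{-1,0,1\}$ by an honest integral weight with $\theta(\delta)=0$, which is dispatched by the linear-independence statement of Corollary \ref{crosstube} once one checks that the values sum to zero over each tube.
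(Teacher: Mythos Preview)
Your proof is correct and follows essentially the same strategy as the paper: assign $\theta$-values to the regular simples of $\mathcal{T}$ according to their role (socle, top, or member of $\mathcal{L}$), invoke the linear-independence result to realize these values by an actual weight, verify $\theta$-stability inside $\req$ via Corollary~\ref{clarity}, and then apply Lemma~\ref{regularstable12}. The paper sets up the same system \[\theta(\dv F_i)<0,\quad \theta(\dv T_i)=-\theta(\dv F_i),\quad \theta(\dv E_i)=0\ (E_i\in\mathcal{L}),\] so your choice of $-1,+1,0$ is simply a particular solution. Your balanced-parentheses/prefix-sum phrasing is a clean repackaging of the paper's case analysis (a), (b), (c) on the composition factors of a tail $V'$; both hinge on the same observation that any socle occurring in $V'$ has its matching top already to the left, while $V$'s own socle is always unmatched. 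One small citation point: for realizability in a single tube you want Lemma~\ref{depends} directly (it gives that the $p$ dimension vectors $\dv E_0,\ldots,\dv E_{p-1}$ are themselves linearly independent), rather than Corollary~\ref{crosstube}, which is tailored to the cross-tube situation of Proposition~\ref{regcomp}.
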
 

\begin{proof} Let $E_{0}, E_1 = \tau(E_0), \ldots, E_{p-1}=\tau^{p-1}(E_0)$ be the regular simples of $\mathcal{T}$ and assume $\mathcal{L}$ is maximal. For each $V_{i} \in \mathcal{L}$ which is regular non-simple, let $F_{i}:=\rSoc(V_{i})$ and $T_{i}:=\rTop(V_{i})$. By Lemma \ref{depends}, we can solve the following system of $p$ inequalities for $\theta$:
\begin{equation} \label{thomas} 
\begin{cases}
\theta(\dv F_{i} ) < 0 \\
\theta(\dv T_{i} ) = -\theta(\dv F_{i})\\
\theta(\dv E_i)=0 & (\text{ for } E_{i} \in \mathcal{L} ) 
\end{cases}
\end{equation}

Choose $V \in \mathcal{L}$ with $\ell+1=r\ell(V)$. If $V$ is regular simple then $\theta(\dv V)=0$ by the construction of $\theta$ and hence $V$ is $\theta$-stable by Lemma \ref{regularstable12}.

Let us assume now that $V$ is not simple in $\mathcal{R}(Q)$. We will show $V$ is $\theta$-stable by checking that $\theta(\dv V)=0$ and $\theta(\dv V')<0$ for any regular subrepresentation $0 \neq V' <V$. 

Let $T=\rTop(V)$, $S=\rSoc(V)=\tau^{\ell}(T)$, and $V'$ a proper regular subrepresentation of $V$. Then:
\[\dv V = \dv T + \dv \tau(T) + \ldots +  \dv \tau^{\ell}(T),\]
and 
\[ \dv V^{\prime} = \dv \tau^{i}(T) + \ldots + \dv \tau^{\ell }(T),\]
for some $0<i \leq \ell$.

Let $1< k< \ell $. If $\tau^k(T)$ is not a regular socle or regular top for any regular non-simple representation in $\mathcal{L}$, then by construction 
\[\theta(\dv \tau^k(T))=0.\]

If  $\tau^{k}(T)$ is the regular top or socle of some regular non-simple representation X in $\mathcal{L}$, Corollary \ref{clarity} implies there exists a unique $j$ with  $k < j <\ell  $ such that $\{\tau^k(T), \tau^{j}(T)\}=\{\rSoc(X), \rTop(X) \}$. Thus, by the construction of $\theta$ in (\ref{thomas}), we have:
\begin{equation} \label{socletopsum} 
\theta(\dv \tau^k(T)) + \theta(\dv \tau^j(T))=\theta(\rTop(X)) + \theta(\rSoc(X))=0.
\end{equation}

Putting everything together, we get that:
\[\theta(\dv V)=\theta(\dv T)+0+\ldots+0+\theta(\dv S)=0.\]

Finally, we determine the sign of $\theta(\dv V')$ by analyzing the composition factors of $V'$ . If $i=1$ then
\[ \theta(\dv V-\dv T)=-\theta(\dv T) < 0.\]
Now, let us assume that $i>1$. Let $F$ be a composition factor of $V'$. According to Corollary \ref{clarity}, exactly one of the following cases occur.
\begin{enumerate}
\item [(a)] $F$ is not a regular socle or top for any regular non-simple representation in $\mathcal L$. Then, $F$ is one of the $E_i$'s, and hence \[\theta(\dv F)=0.\]

\item [(b)] There exists a unique composition factor $F'$ of $V'$ such that $F$ and $F'$ are the regular top and socle of some representation $X$ in $\mathcal L$. In this case, by the construction of $\theta$, we have:
\[\theta(\dv F)+\theta(\dv F')=0.\]

\item [(c)] $F$ is one of the $F_i$'s, being the regular socle of some representation $Y \in \mathcal L$ whose regular top is not among the composition factors of $V'$. Then: 
\[\theta(\dv F)<0.\]
\end{enumerate} 

In conclusion, we get that:
\[\theta(\dv \tau^i(T)) + \ldots + \theta(\dv S) \leq \theta(\dv S)<0. \]
The proof now follows from Lemma \ref{regularstable12}.
\end{proof} 

\begin{prop} \label{regcomp}
Let $\mathcal{L}$ be an orthogonal Schur sequence of regular representations. Then there exists a common stability weight for $\mathcal{L}$. 
\end{prop}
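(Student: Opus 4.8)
The plan is to reduce the general regular case to the single-tube case already settled in Proposition \ref{sol1}, and then to glue the per-tube weights into one global weight using the (near-)linear-independence results of Lemma \ref{depends} and Corollary \ref{crosstube}.

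First I would partition $\mathcal{L}$ according to the tubes in which its members lie. Since indecomposable representations lying in distinct tubes are automatically mutually orthogonal with no non-trivial extensions (as used in the proof of Corollary \ref{crosstube}), the orthogonality constraints never cross between tubes, and the members of $\mathcal{L}$ in any fixed tube again form an orthogonal Schur sequence. For a \emph{homogeneous} tube, Lemma \ref{dimcap} forces any Schur member to be the $\delta$-dimensional regular simple, whose only regular subrepresentations are $0$ and itself; so by the remark following Lemma \ref{regularstable12}, such a representation is $\theta$-stable in $\req$ as soon as $\theta(\delta)=0$. For each \emph{non-homogeneous} tube $\mathcal T$, I would extend the corresponding sub-sequence to a maximal orthogonal Schur sequence inside $\mathcal T$ (possible by the remark following the definition of maximality) and invoke Proposition \ref{sol1}.

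The key point in gluing is the observation that the weight produced by Proposition \ref{sol1} on a non-homogeneous tube is prescribed entirely through its values on the regular simples of that tube, and that these values sum to zero over the whole tube: in a maximal sequence every regular simple either lies in the sequence (value $0$) or is paired as the regular top/socle of a unique member (the pair summing to $0$ by the system (\ref{thomas})). Since the full $\tau$-orbit of regular simples of a tube sums to $\delta$, this says precisely that the per-tube prescription is compatible with $\theta(\delta)=0$. Now Lemma \ref{depends} (equivalently Corollary \ref{crosstube}) asserts that the only linear relations among the dimension vectors of the non-homogeneous regular simples are the tube-sum relations $\sum_j \epsilon_{ij}=\delta$; consequently a system of prescribed values on these vectors is realizable by a single linear functional exactly when the prescribed within-tube sums all agree, which here they do, being uniformly $0$. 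I would therefore choose $\theta\in\ZZ^{Q_0}$ (solve over $\QQ$, then clear denominators, which does not affect stability) simultaneously restricting to the Proposition \ref{sol1} weight on each non-homogeneous tube and satisfying $\theta(\delta)=0$; note $\delta$ lies in the span of the $\epsilon_{ij}$, so $\theta(\delta)=0$ is automatic and also handles the homogeneous members.

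Finally, because the regular composition factors of any $V\in\mathcal L$ all lie in the single tube containing $V$, the verification of $\theta$-stability of $V$ in $\req$ carried out in Proposition \ref{sol1} depends only on the values of $\theta$ on that tube's regular simples; hence every member of $\mathcal L$ is $\theta$-stable in $\req$. A single application of Lemma \ref{regularstable12} then upgrades $\theta$ to a weight $\sigma$ for which every member of $\mathcal L$ is stable in all of $\rep(Q)$, giving the desired common stability weight. I expect the main obstacle to be this gluing step: one must confirm that the independently constructed per-tube prescriptions are genuinely compatible as the values of one functional, and it is exactly the cross-tube independence of Corollary \ref{crosstube} together with the automatic normalization $\theta(\delta)=0$ that guarantees this.
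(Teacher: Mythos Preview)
Your proposal is correct and follows essentially the same route as the paper: partition $\mathcal L$ by tubes, extend the non-homogeneous pieces to maximal orthogonal Schur sequences, solve the per-tube system (\ref{thomas}) simultaneously across the non-homogeneous tubes using the cross-tube independence of Corollary \ref{crosstube} together with the automatic normalization $\theta(\delta)=0$, observe this handles the homogeneous members as well, and finish with a single application of Lemma \ref{regularstable12}. Your discussion of why the per-tube prescriptions are compatible (each forces the within-tube sum to vanish, and the only relations among the $\epsilon_{ij}$ are the tube-sum relations) makes explicit exactly the point the paper compresses into the sentence ``we have one less equation to solve in the other tubes.''
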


\begin{proof}
There are at most three non-homogeneous tubes $T_{1}, T_{2}$, and $T_{3}$ with periods $p_{1}, p_{2}$, and $p_{3}$ respectively. Assume $\mathcal{L}$ is an orthogonal Schur sequence such that:
\[\mathcal{L}=\mathcal{L}_0 \cup \mathcal{L}_{1} \cup \mathcal{L}_{2} \cup \mathcal{L}_{3} ,\]
where $\mathcal{L}_0$ consists of only homogeneous Schur representations, and $\mathcal{L}_i \subseteq T_{i}$, $1 \leq i \leq 3$. Extend $\mathcal{L}_{i}$ to a maximal orthogonal Schur sequence in $\mathcal{T}_i$ for each $1 \leq i \leq 3$. 

It suffices to show we can solve the system of inequalities (\ref{thomas}) across each tube simultaneously. Notice that when we satisfy (\ref{thomas}) over one tube, we automatically have $\theta(\delta) = 0$. Thus, we have one less equation to solve in the other tubes. So by Corollary \ref{crosstube}, we can solve (\ref{thomas})  across the non-homogenous tubes simultaneously.   Any other representation in our orthogonal Schur sequence must lie in a homogeneous tube, and thus has dimension $\delta$. But we already have $\theta(\delta)=0$. So far, we have proved that there exists a weight $\theta$ such that for any $V \in \mathcal L$ and $V' \leq V$ with $V'$ regular, $\theta(\dv V)=0$ and $\theta (\dv V') < 0$. 

Finally, invoking Lemma \ref{regularstable12}, it follows that there exists a common weight $\theta_0$ such that any $V \in \mathcal L$ is $\theta_0$-stable. 
\end{proof}

\begin{example}
Let $Q$ be the $\widetilde{\mathbb{D}}_5$ quiver: 
\[ \vci{\xymatrix@R=1em@C=1em{1 \ar[dr] & \; & \; &2 \\
\; &5 \ar[r] & 6 \ar[ur]  \ar[dr]  & \; \\
  4 \ar[ur] & \; & \; &3  \\}}
\] 

The three non-homogeneous regular tubes of $Q$ are generated by the following regular simples:

\begin{equation*} 
\mathcal{T}_1 = \left < E_1 = \vci{ \xymatrix@R=1em@C=1em{K  \ar[dr]_{id} & \; & \; & K \\
\; &K  \ar[r]_{id} & K \ar[ur]_{id}  \ar[dr]_{id}  & \; \\
  K \ar[ur]_{id} & \; & \; & K  \\}}, E_2 =  \vci{ \xymatrix@R=1em@C=1em{0  \ar[dr] & \; & \; & 0 \\
\; &K  \ar[r] & 0 \ar[ur]  \ar[dr]  & \; \\
  0 \ar[ur] & \; & \; & 0  \\}}, E_3=\vci{ \xymatrix@R=1em@C=1em{0  \ar[dr] & \; & \; & 0 \\
\; &0   \ar[r] & K \ar[ur]  \ar[dr]  & \; \\
  0 \ar[ur] & \; & \; & 0  \\}}  \right >,
\end{equation*}

\begin{equation*}
\mathcal{T}_2 = \left <L_1 = \vci{ \xymatrix@R=1em@C=1em{K  \ar[dr]_{id} & \; & \; & K \\
\; &K  \ar[r]_{id} & K \ar[ur]_{id}  \ar[dr]  & \; \\
 0 \ar[ur] & \; & \; & 0  \\}}, L_2 =  \vci{ \xymatrix@R=1em@C=1em{0  \ar[dr] & \; & \; & 0 \\
\; &K  \ar[r]_{id} & K \ar[ur]  \ar[dr]_{id}  & \; \\
 K \ar[ur]_{id} & \; & \; & K  \\}}  \right >, 
\end{equation*}

\begin{equation*}
\mathcal{T}_3 = \left <Y_1 = \vci{ \xymatrix@R=1em@C=1em{K  \ar[dr]_{id} & \; & \; & 0 \\
\; &K  \ar[r]_{id} & K \ar[ur]  \ar[dr]_{id}  & \; \\
 0 \ar[ur] & \; & \; & K  \\}},Y_2 =  \vci{ \xymatrix@R=1em@C=1em{0  \ar[dr] & \; & \; & K \\
\; &K  \ar[r]_{id} & K \ar[ur]_{id}  \ar[dr]  & \; \\
 K \ar[ur]_{id} & \; & \; &0   \\}}  \right >. 
\end{equation*}

Consider the following orthogonal Schur sequence
\[\mathcal L=\mathcal L_0 \cup \mathcal L_1 \cup \mathcal L_2 \cup \mathcal L_3, \]
where:
\[\mathcal L_0= \left \{  V_0=  \vci{ \xymatrix@R=1em@C=1em{K  \ar[dr]^{\left [\begin{smallmatrix} 1 \\ 0 \end{smallmatrix} \right ]} & \; & \; & K \\
\; &K^2  \ar[r]_{id} & K^2 \ar[ur]^{[1 \,\, 1]}  \ar[dr]_{[1 \,\, 2 ]}  & \; \\
 K \ar[ur]_{\left [\begin{smallmatrix} 0 \\1 \end{smallmatrix} \right ]} & \; & \; & K  \\ }} \right  \},\]
 
\[\mathcal L_1= \left \{V_1 = \vci{ \xymatrix@R=1em@C=1em{K  \ar[dr]^{\left [\begin{smallmatrix} 1 \\ 0 \end{smallmatrix} \right ]} & \; & \; & K \\
\; &K^2  \ar[r]_{id} & K^2 \ar[ur]^{[1 \,\, 1]}  \ar[dr]_{[1 \,\, 1]}  & \; \\
 K \ar[ur]_{\left [\begin{smallmatrix} 0 \\1 \end{smallmatrix} \right ]} & \; & \; & K  \\}} = \left( \begin{tabular}{c} $E_1$ \\ $E_2$ \\ $E_3$ \end{tabular} \right ), V_2 = E_2 \right \}, \]
 
 \[\mathcal L_2= \left\{V_{3} =
 \vci{ \xymatrix@R=1em@C=1em{K  \ar[dr]^{\left [\begin{smallmatrix} 1 \\ 0 \end{smallmatrix} \right ]} & \; & \; & K \\
\; &K^2  \ar[r]_{id} & K^2 \ar[ur]^{[1 \,\, 0]}  \ar[dr]_{[1 \,\, 1]}  & \; \\
 K \ar[ur]_{\left [\begin{smallmatrix} 0 \\1 \end{smallmatrix} \right ]} & \; & \; & K  \\ }}= \left( \begin{tabular}{c} $L_1$ \\ $L_2$  \end{tabular} \right )
 \right \}, \text{~and~}\mathcal L_3=\{V_4=Y_1, V_5=Y_2\}.
\]



We first solve for a weight $\theta$  so that each representation $V_i$ is $\theta$-stable in $\mathcal{R}(Q)$, then modify $\theta$ to obtain a stability weight in $\rep{Q}$. We begin by solving the following system:

\[ \left [\begin{array}{c} 
\dv E_1 \\
\dv E_2 \\
\dv E_3 \\
\dv L_1 \\
\dv Y_1 
\end{array} \right ] \cdot \theta
= 
 \left [\begin{array}{cccccc}
1 & 1 & 1 & 1 & 1 & 1 \\
0 & 0 & 0 & 0 & 1 & 0 \\
0 & 0 & 0 & 0 & 0 & 1 \\
1 & 1 & 0 & 0 & 1 & 1 \\
1 & 0 &1 & 0 & 1 & 1 \\
\end{array} \right ] 
\left[
\begin{array}{c}
\theta_1 \\
\theta_2 \\
\theta_3 \\
\theta_4 \\
\theta_5 \\
\theta_6     
\end{array} \right]
=
\left [\begin{array}{c}
1 \\
0 \\
-1 \\
1 \\
0
\end{array} \right ].
\]
(Note that the constants on the right hand side of the equations of the system above are picked according to the recipe described in the proof of Proposition \ref{sol1}.)

The general solution of this system is $(t, 2-t, 1-t, t-1, 0, -1)$ for $t \in \mbb{R}$. When $t=1$, we get  $\theta = (1,1,0,0,0,-1)$ and it is easy to check that each $V_i$ is $\theta$-stable in $\mathcal{R}(Q)$. (Of course, any other integer $t$ and the corresponding $\theta$ work equally well.) Furthermore, if we let $N=\max\{\theta(\dv X_i)  \mid X_i \leq V_i, X_i \mbox{ non-regular}  \}$, then $N=1$. 

Now set:
\[\sigma= \theta + 2\langle \delta, \cdot \rangle = (3,-1,-2,2,0,-1).\] 
One can check that each $V_i$ is $\sigma$-stable. In particular, it follows that the representation $\displaystyle V = \bigoplus_{i=1}^{6} V_i$ is locally semi-simple. 
\end{example}

\section{Proof of Theorem \ref{main-thm}} \label{proof-thm}
We begin with an example of a representation of a wild quiver that has a semi-simple endomorphism ring but is not locally semi-simple. 
\begin{example} \label{K3}
Let $Q$ be the 3-arrow Kronecker quiver  
\[ \vci{\xymatrix{ \bullet \ar@<-1.ex>[r] \ar[r] \ar@<1.ex>[r] & \bullet }}\] and consider the representation $V$ defined by $V(1)=V(2)=K^2$, and maps $ \displaystyle \left[\begin{smallmatrix} 1 & 0 \\ 0 & 1  \end{smallmatrix} \right], \; \left [ \begin{smallmatrix} 1 & 0  \\ 0 & 0  \end{smallmatrix} \right],   \left [ \begin{smallmatrix} 0 & 0  \\ 0 & 1  \end{smallmatrix} \right] $. 

It is easy to check that $V$ is a Schur representation.  Suppose $V$ is $\theta=(\theta_{1}, \theta_{2})$-stable for some weight $\theta$. Then $\theta(1)+\theta(2)=0$ as $\theta((2,2))=0$. Moreover, it is easy to see that $V$ has a proper subrepresentation of dimension vector $(1,1)$. This would imply that $\theta(1)+\theta(2)<0$ (contradiction). So, $V$ cannot be stable with respect to any weight.
\end{example}

Let $Q'$ and $Q$ be two acyclic quivers and let $\mathcal F: \rep{Q'} \rightarrow \rep{Q}$ be a fully faithful exact embedding. For a dimension vector $\beta^{\prime} \in \mbb{Z}_{\geq 0}^{Q'_0}$ of $Q'$, set \[\mathcal F(\beta'):=\sum_{i \in Q_{0}^{\prime}} \beta'(i)\dv \mathcal F(S_i) \in \mbb{Z}_{\geq 0}^{Q_{0}}.\]
Then for any $V^{\prime} \in \rep(Q')$ with $\dv V^{\prime} = \beta^{\prime}$, we have $\dv \mathcal F(V^{\prime})=\mathcal F(\beta')$. 

For a weight $\theta \in \mbb{Z}^{Q_{0}}$ of $Q$, set 
\[\mathcal F^{-1}(\theta):=\left ( \theta(\dv \mathcal F(S_i)) \right)_{i \in Q_0'} \in \mbb{Z}^{Q'_0}.\]
Then, for any dimension vector $\beta'$ of $Q'$ and any weight $\theta$ of $Q$, we have

\[\theta(\mathcal F(\beta'))=\theta \left (\sum_{i \in Q_0'} \beta' \dv \mathcal F(S_i) \right) = \sum_{i \in Q'_0} \beta'(i)\theta(\dv \mathcal F(S_i)) = \mathcal F^{-1}(\theta)(\beta').\]

\begin{prop} \label{refstab} Given any wild quiver $Q$, there exists a Schur representation that is not stable for any weight. 
\end{prop}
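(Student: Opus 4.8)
The plan is to transport the bad representation of Example \ref{K3} into $\rep(Q)$ along a fully faithful exact embedding. The two steps are: (i) show that every wild quiver $Q$ admits a fully faithful exact embedding $\mathcal F\colon \rep(K_3)\to\rep(Q)$, where $K_3$ is the $3$-arrow Kronecker quiver; and (ii) take the Schur representation $V'$ of $K_3$ from Example \ref{K3} and verify that $\mathcal F(V')$ is again Schur and is $\theta$-stable for no weight $\theta$ of $Q$. The identities $\dv\mathcal F(V')=\mathcal F(\dv V')$ and $\theta(\mathcal F(\beta'))=\mathcal F^{-1}(\theta)(\beta')$ recorded just before the statement are tailored precisely for step (ii).

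I expect step (ii) to be routine. Full faithfulness gives $\End_Q(\mathcal F(V'))\cong\End_{K_3}(V')\cong K$, so $\mathcal F(V')$ is Schur. Now suppose $\mathcal F(V')$ were $\theta$-stable for some weight $\theta$ of $Q$; I would deduce that $V'$ is $\mathcal F^{-1}(\theta)$-stable, contradicting Example \ref{K3}. Indeed, $\mathcal F^{-1}(\theta)(\dv V')=\theta(\dv\mathcal F(V'))=0$, and for any proper nonzero subrepresentation $W'<V'$ the exactness and full faithfulness of $\mathcal F$ make $\mathcal F(W')<\mathcal F(V')$ a proper nonzero subrepresentation, so King's criterion (Theorem \ref{King-general-thm}) applied to $\mathcal F(V')$ yields
\[
\mathcal F^{-1}(\theta)(\dv W')=\theta(\dv\mathcal F(W'))<0.
\]
Hence $V'$ would satisfy the stability inequalities for the weight $\mathcal F^{-1}(\theta)$, which is impossible.

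The real work is step (i): building $\mathcal F$. My plan is to locate a generalized Kronecker quiver inside $\rep(Q)$ by the perpendicular-category calculus of Schofield and Crawley-Boevey. Writing $n=|Q_0|$, I would choose an exceptional sequence $(E_1,\dots,E_{n-2})$ in $\rep(Q)$; its right perpendicular category is a full exact abelian subcategory of $\rep(Q)$, equivalent to $\rep(Q')$ for a two-vertex quiver $Q'$, i.e. a generalized Kronecker quiver $K_m$. The inclusion then provides a fully faithful exact embedding $\rep(K_m)\hookrightarrow\rep(Q)$, and composing it with the obvious fully faithful exact embedding $\rep(K_3)\hookrightarrow\rep(K_m)$ (obtained by setting the last $m-3$ arrows equal to zero) produces the desired $\mathcal F$.

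The main obstacle is guaranteeing $m\ge 3$, that is, that the rank-two perpendicular can be taken to be a \emph{wild} Kronecker quiver. This is exactly where the hypothesis that $Q$ is wild enters: wildness is equivalent to the Tits form being indefinite, and one must argue that this indefiniteness is inherited by a suitable rank-two perpendicular category, for otherwise every length-$(n-2)$ exceptional sequence would have perpendicular of type $A_2$ ($m=1$), Kronecker ($m=2$), or disconnected ($m=0$), forcing $Q$ to be tame. Assembling the existence of an exceptional sequence whose perpendicular is a \emph{connected} rank-two quiver with $m\ge 3$, from the classification of connected quivers into Dynkin, Euclidean, and wild types, is the step I expect to require the most care.
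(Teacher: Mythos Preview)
Your overall strategy coincides with the paper's: transport the $K_3$ representation of Example \ref{K3} through a fully faithful exact embedding $\mathcal F\colon\rep(K_3)\to\rep(Q)$, and then use the identities $\theta(\mathcal F(\beta'))=\mathcal F^{-1}(\theta)(\beta')$ to pull a hypothetical stability weight back to $K_3$, where Example \ref{K3} furnishes the contradiction. Your step (ii) is essentially the paper's argument; the paper just works directly with the specific $(1,1)$-dimensional subrepresentation $W'$ and the arithmetic $\theta'((1,1))=\tfrac12\theta'((2,2))=0$ rather than invoking King's criterion in full, but the content is the same.

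The genuine difference is in step (i). The paper dispatches it in one sentence by citing the standard fact that the path algebra of any wild (acyclic) quiver is \emph{strictly wild}, which by definition supplies the fully faithful exact embedding from $\rep(K_3)$. You instead outline a constructive route via perpendicular categories of exceptional sequences, reducing to a rank-two perpendicular $K_m$ and then arguing $m\ge 3$. That route is valid and is in fact one of the known proofs of strict wildness for wild hereditary algebras, but it is considerably more work than the paper undertakes, and the part you flag as delicate (forcing $m\ge 3$) is exactly the nontrivial input that the black-box citation absorbs. In short: your proof is correct and your step (ii) matches the paper, but your step (i) re-derives what the paper simply quotes; invoking strict wildness directly is the shorter and standard move here.
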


\begin{proof} Since the path algebra of any wild quiver is strictly wild, we know that there exists a fully faithful exact embedding $\mathcal{F}:\rep(K_3) \to \rep(Q)$. 

Let $V^{\prime}$ be the representation of $K_{3}$ in Example \ref{K3} with $\beta' = \dv V'=(2,2)$. If $V$ denotes $\mathcal{F}(V^{\prime})$ then its dimension vector is $\beta=\mathcal F(\beta')$. 

Assume for a contradiction that $V\in \rep{Q}$ is $\theta$-stable for some weight $\theta \in \ZZ^{Q_0}$. In particular, we have that $\theta(\beta)=0$. Denoting $\mathcal F^{-1}(\theta)$ by $\theta'$, we also get that $\theta'(\beta')=0$.

Now, let $W'$ be the subrepresentation of $V'$ of dimension vector $(1,1)$ from the example above. Since $\mathcal{F}$ is a fully faithful exact functor, $W:=\mathcal{F}(W')$ is a proper subrepresentation of $V$. Moreover, $\theta'(\dv W')=\theta(\dv W)<0$ but this is a contradiction since $\theta'(\dv W')={1 \over 2} \theta'(\beta')=0$.   
\end{proof}  

Now, we are ready to prove Theorem \ref{main-thm}:

\begin{proof}[Proof of Theorem \ref{main-thm}]
($\Leftarrow$)  This follows from Proposition \ref{refstab}.\\

\noindent
($\Rightarrow$)
Assume that $Q$ is a tame quiver and let $V$ be a representation of $Q$ such that $\End_Q(V)$ is a semi-simple ring. Let $V_1, \ldots, V_r$ be the pairwise non-isomorphic indecomposable direct summands of $V$. According to Theorem \ref{locally-semi-simple-gen-theorem}, we need to show that there exists a common weight $\theta \in \ZZ^{Q_0}$ such that each $V_i$ is $\theta$-stable. In the tame case, this follows from Proposition \ref{qsswt}, Proposition \ref{dynkin}, Corollary  \ref{mixedcomp1}, Proposition \ref{mixedcomp}, and Proposition \ref{regcomp}.
\end{proof}


\end{document}